\documentclass[11pt]{article}
\usepackage{amsmath, amssymb, amsfonts, amsthm, graphicx, relsize, mathtools, mathrsfs, euscript, bbm, bbold, float, hyperref, enumitem, url, breakurl, stmaryrd, xcolor}
\usepackage[english]{babel}
\usepackage{titlesec}
\usepackage{tikz}
\titlelabel{\thetitle.\,\,}

\hypersetup{
colorlinks,
linkcolor={black!50!black},
citecolor={black!50!black},
urlcolor={black!80!black}
}

\allowdisplaybreaks

\makeatletter
\newtheorem*{rep@theorem}{\rep@title}
\newcommand{\newreptheorem}[2]{%
\newenvironment{rep#1}[1]{%
 \def\rep@title{#2 \ref{##1}}%
 \begin{rep@theorem}}%
 {\end{rep@theorem}}}
\makeatother

\theoremstyle{plain}
\newtheorem{theorem}{Theorem}[section]
\newreptheorem{theorem}{Theorem}
\newtheorem{lemma}[theorem]{Lemma}
\newtheorem{corollary}[theorem]{Corollary}

\newcommand{\eps}{\varepsilon}
\theoremstyle{definition}
\newtheorem{definition}[theorem]{Definition}
\newtheorem{remark}[theorem]{Remark}
\def\urltilda{\kern-0.15em\lower0.7ex\hbox{\~{}}\kern0.04em}

\definecolor{royalpurple}{RGB}{86,28,168}

\tikzset{
  ep/.style    = {black, thick},
  en/.style    = {royalpurple, thick, dashed},
  epT/.style   = {black, thick, opacity=0.28},
  enT/.style   = {royalpurple, thick, dashed, opacity=0.28},
  vtx/.style   = {circle, draw=black, fill=black, inner sep=1.8pt},
  hubin/.style = {circle, draw=black, fill=black, inner sep=2.4pt},
  hubout/.style= {coordinate}, 
}

\usetikzlibrary{calc,fit,backgrounds}

\begin{document}

\title{\bf{\LARGE Orthogonal signed graphs of degree 5}}


\author{
Maxwell Levit$^{^{1}}$  \qquad  Bojan Mohar$^{^{2}}$  \qquad   Behruz Tayfeh-Rezaie$^{^{3}}$\\[2mm]
$^{^1}$York University\\
$^{^2}$Simon Fraser University\\
$^{^3}$Institute for Research in
Fundamental  Sciences (IPM)\\[2mm]
\href{mailto:maxwell\_levit@sfu.ca}{mlevit@yorku.ca} \qquad
\href{mailto:mohar@sfu.ca}{mohar@sfu.ca}  \qquad
\href{mailto:tayfeh-r@ipm.ir}{tayfeh-r@ipm.ir}}

\date{}

\maketitle

\begin{abstract}
\noindent
An orthogonal signed graph is a connected signed graph whose signed adjacency matrix has pairwise orthogonal rows. They are closely related to Hadamard matrices, maximal arrangements of equiangular lines, bipartite Ramanujan graphs, and the remarkable resolution of the Sensitivity Conjecture. 
Orthogonal signed graphs of degree at most 4 have been completely classified, and partial results were known for degree $5$.
We complete the classification of orthogonal signed graphs of maximum degree $5$. We also provide several new infinite families of 6- and 8-regular orthogonal signed graphs.
 \\[3mm]
\noindent {\bf Keywords:} Signed graph, Orthogonal signed graph,  Weighing matrix, $(0,2)$-graph.\\
\noindent {\bf AMS 2020 Mathematics Subject Classification:}  05C22, 05C50, 05B20.
\end{abstract}

\section{Introduction}
A {\sl signed graph}  is a pair $\Gamma=(G, \sigma)$, where $G$ is a simple graph,  called {\sl the underlying graph}, and $\sigma$,
 called the {\sl  signature}, is a function that assigns 1 or $-1$  to each edge of $G$. A signed graph is \textit{regular}, \textit{simple}, \textit{connected}, or \textit{bipartite} if its underlying graph is respectively regular, simple, connected, or bipartite.

The {\sl signed adjacency matrix} $A=A(\Gamma)$ of  $\Gamma$ is obtained from the standard adjacency matrix of $G$ by replacing each nonzero entry at position $ij$ with $\sigma(ij)$.
 We say that $\Gamma$  is {\sl orthogonal }  if $G$ is connected and $A^2=D$, where  $D$ is a diagonal matrix with the vertex degrees of $G$ on the diagonal. If instead the signed adjacency matrix $A$ is given, we use $\Gamma(A)$ to denote the signed graph. In Section \ref{sec:prelim} we will show that any orthogonal signed graph is regular.

Orthogonal $k$-regular signed  graphs have been classified  for $k\leqslant4$ in \cite{MK}. See also \cite{Gh,Ho} for other proofs.  For $k=5$, the classification was initiated in \cite{St}, though it was not completed. 

Our main result (Theorem \ref{Thm:Main_thm_technical}) is to construct a new infinite family of $5$-regular orthogonal signed graphs and then prove that the list from \cite{St} is complete once this family is appended.

\begin{theorem} \label{Thm:Main_thm_colloquial}
Every $5$-regular orthogonal signed graph is switching equivalent to one of the following: 

\begin{enumerate}
    \item One of four `sporadic' graphs.
    \item $T_n\square K_2$, $n\geqslant3$,
    \item $T_n^+$, $n\geqslant 5$,
    \item $T_{2n}^{\ast}$, $n\geqslant 2$.
\end{enumerate}
\end{theorem}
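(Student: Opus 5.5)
The plan is a local-to-global structural analysis built on the orthogonality condition. Since $A^2=5I$, any two distinct vertices $u,v$ at distance $2$ have an even number of common neighbours, and the signed contributions $\sigma(uw)\sigma(wv)$ over these common neighbours $w$ sum to zero. So every pair at distance two has exactly $2$ or $4$ common neighbours. Moreover, every path of length $2$ lies in at least one $4$-cycle, and each such $4$-cycle is negative, i.e. its product of signs is $-1$. Thus the underlying graph $G$ is a $5$-regular graph in which each pair at distance $2$ has $2$ or $4$ common neighbours. Switching classes are determined by the signs of cycles, and we control the $4$-cycles completely. Therefore, once $G$ is known, we can recover $\sigma$ up to switching by showing that the negative $4$-cycles generate the cycle space, or by checking the few remaining cycles directly.

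First I would fix a vertex $v$ and analyse its second neighbourhood. Each of the $\binom52=10$ pairs in $N(v)$ either is an edge, or has a common neighbour other than $v$. The pair has a second common neighbour exactly when one of these holds:
\begin{itemize}
\item some vertex in $N(v)$ is adjacent to both;
\item some vertex at distance two from $v$ is adjacent to both.
\end{itemize}
Counting edges out of $N(v)$ (there are $20$ of them, minus twice the edges inside $N(v)$) against the requirement that every $w\in N_2(v)$ have $2$ or $4$ neighbours in $N(v)$ gives a short list of possible local configurations. This list is indexed by the graph induced on $N(v)$ (triangles are allowed in the signed setting) and by the multiset of values $|N(w)\cap N(v)|$ over $w\in N_2(v)$. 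I would enumerate these local types; a finite case check, possibly computer-aided, is acceptable here. The enumeration should also record which neighbouring vertices $u\in N(v)$ can carry which types.

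Second, I would propagate these local types. If some vertex has a "dense" type, for example a pair with $4$ common neighbours, then the configuration closes up quickly, and a bounded search produces the four sporadic graphs. This search is a finite extension argument: the graph has bounded diameter in these cases. Otherwise every vertex has a "sparse" type, and I would show that the local structure forces a linear or cyclic arrangement. The graph then decomposes into consecutive blocks, each a copy of a small gadget, glued along a cyclic sequence. These gadgets are presumably $K_2$-fibres over a cycle for $T_n\square K_2$, a twisted version for $T_{2n}^\ast$, and the extra-edge pattern for $T_n^+$. The gluing rule between consecutive blocks should be uniquely determined, up to at most a twist, by the local types. Closing the cycle then yields exactly the listed families, with the parity and size constraints $n\geqslant3$, $n\geqslant5$, and even order for $T^\ast_{2n}$ arising from the consistency of the closing step.

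Finally, for each underlying graph obtained, I would verify the signature up to switching. I would pick a spanning tree, switch so that all tree edges are positive, and observe that the negative-$4$-cycle condition determines all other edges. I expect the main obstacle to be the second step: proving that the sparse local types can only propagate along a single cyclic "spine". In particular, this means ruling out branching or mixed arrangements of gadgets, which is presumably where the earlier partial classification was incomplete, and where the new family $T_{2n}^\ast$ appears. To handle it, I would first classify the pairs of adjacent vertices by their joint local types. I would then show that the resulting "type graph" has maximum degree $2$ on the relevant blocks.
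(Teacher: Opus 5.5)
Your case split is the wrong way round, and this is a genuine gap.

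\textbf{Pairs with four common neighbours do not bound anything.} A pair with four common neighbours does not make the configuration close up. In $T_n$ the vertices $(0,j)$ and $(1,j)$ have the same neighbourhood $\{(0,j\pm1),(1,j\pm1)\}$. So every member of $T_n\square K_2$, $T_n^+$ and $T^\ast_{2n}$ contains such pairs, and a bounded search started from them cannot terminate. Things close up only when three vertices pairwise share four neighbours.

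\textbf{The sporadic graphs are the other branch.} They are exactly the graphs in which no pair has four common neighbours, i.e.\ $G$ is a $(0,2)$-graph. There is no cyclic spine there (think of $\overline{Q_5}$), so your propagation step would fail in that branch. What makes that case finite is Mulder's bound $|V(G)|\leqslant 2^5$ together with the known list of eight candidate $5$-regular $(0,2)$-graphs, exactly four of which admit an orthogonal signature. Nothing in your plan plays that role.

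The paper organizes everything by the graph $D_G$ of pairs with four common neighbours, which is a disjoint union of cliques:
\begin{itemize}
\item $D_G$ edgeless is the $(0,2)$ case;
\item $D_G$ containing a triangle forces one of $T_3^+$, $T_6^\ast$, $T_4\square K_2$, $T_4^+$;
\item $D_G$ a nonempty matching yields the three infinite families.
\end{itemize}

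\textbf{Not every $4$-cycle is negative.} This second error feeds the first. If $|N_u\cap N_v|=4$, two of the $2$-walks from $u$ to $v$ are positive and two are negative. So exactly two of the six $4$-cycles through $u,v$ are positive. You therefore do not ``control the $4$-cycles completely,'' and your signature-recovery step fails as stated.

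These positive $4$-cycles are the engine of the paper's argument for the infinite families. If $(u,w_1,v,w_2)$ is positive, the walks $w_1uw_2$ and $w_1vw_2$ have the same sign. Hence $w_1,w_2$ need two more common neighbours and form another $D_G$-edge. Iterating gives a closed chain, i.e.\ a $T_n$ subgraph. Then one of two things happens:
\begin{itemize}
\item the $T_n$ is induced, and the lemma on $(k-1)$-regular orthogonal subgraphs gives $T_n\square K_2$;
\item the fifth edges form a perfect matching on $V(T_n)$, which balancing forces to be either $\{(0,j),(1,j)\}$ (giving $T_n^+$) or antipodal (giving $T^\ast_{2n}$).
\end{itemize}

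\textbf{Two smaller points.}
\begin{itemize}
\item Adjacent pairs are not exempt from the even-common-neighbour condition. Every pair in $N(v)$ needs a common neighbour besides $v$, even if it is an edge. Applied to $v$ and $w\in N(v)$, the same condition makes $G[N(v)]$ Eulerian; together with sign balance this leaves only four local types.
\item The restriction $n\geqslant5$ for $T_n^+$ does not come from any consistency condition at closing. $T_3^+$ (the signed $K_6$) and $T_4^+\simeq T_4^\ast$ are genuine examples from the triangle case. In fact $T_3^+$ is not covered by the list in the statement as written, so a correct argument will produce it.
\end{itemize}
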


The undefined notation is found in Definitions  \ref{defT2n}, \ref{defT+} and \ref{defTast} below and the precise statement of the theorem is given in Theorem \ref{Thm:Main_thm_technical}.

In Section \ref{Sec:Kgeq6} we share some thoughts on the characterization of $6$-regular orthogonal signed graphs. We introduce a new method for constructing infinite families via a  `split and join' operation, and give novel infinite families of 6- and 8-regular orthogonal signed graphs  constructed by iterating this operation.

\subsection{Background and motivation}
Orthogonal signed graphs are the optimal solutions to several well-studied spectral problems in combinatorics. Suppose, in analogy with Hadamard matrices, we wish to maximize the determinant of the signed adjacency matrix of a regular graph. Among all signings, the orthogonal signed graphs are our best choice. Indeed, when the fixed graph is complete bipartite, exactly the Hadamard matrices are recovered as the optimizers. 

If instead we wish to minimize the spectral radius, the answer is again the orthogonal signed graphs. Spectral radius minimization for signed graphs is extremely important, as it is the mechanism by which infinite families of bipartite Ramanujan graphs of a fixed degree were constructed \cite{BL,MSS}. We should note that in this application, one must use signed graphs which adhere to a weaker spectral radius bound of $2\sqrt{k-1}$ rather than the optimal bound afforded by orthogonal signed graphs.  

In fact, the spectral theory of signed graphs dates back to the 1970's and an application to the geometry of equiangular lines in real space \cite{LS73}. The maximum number $n$ of pairwise equiangular lines embedded in $\mathbb{R}^d$ is bounded above by a function in terms of $d$ and the angle between the lines. This bound is tight precisely when a signed complete graph $K_n$ associated with the configuration of lines has exactly two distinct eigenvalues. A particularly beautiful example is the $6$ lines through antipodal pairs of vertices of a regular icosahedron, whose associated signed $K_6$ is orthogonal and will appear later under the name $T_3^+$.

More recently, orthogonal signed hypercubes were crucial to Huang's proof  of the long-standing Sensitivity Conjecture \cite{Hu}. 
Using  Huang's argument, one may  show  that in  every orthogonal $k$-regular signed  graph any induced subgraph on more than half of the vertices has a vertex with degree at least $\sqrt{k}$.

See also the survey \cite{Bel} for an overview of the spectral theory of signed graphs including many questions on signed graphs with two distinct eigenvalues.

\subsection{Preliminaries}\label{sec:prelim}
For vertices $u$ and $v$ of the signed graph $\Gamma=(G,\sigma)$ we let $u\sim v$ denote that $u$ and $v$ are adjacent in $G$. We let $N(u)=N_u=\{v\in V(\Gamma): u\sim v\}$ denote the neighborhood of $u$ in $G$. Let $A=A(\Gamma)$ and let \[\lambda(u,v):=(A^2)_{uv}=\sum_{w\in N_u\cap N_v} \sigma(uw)
\cdot \sigma(vw).\]

$\Gamma$ is orthogonal if and only if  $\lambda(u,v)=0$ for each pair of distinct vertices $u$ and $v$. We note a simple but important consequence of this fact. 

\begin{lemma}\label{Lem:evencommon}
    If $\Gamma=(G,\sigma)$ is an orthogonal signed graph, then each pair of distinct vertices of $G$ has an even number of common neighbors. \qed
\end{lemma}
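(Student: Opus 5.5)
The plan is a parity count directly from the definition of $\lambda$. Fix distinct vertices $u,v$ of $G$ and let $c = |N_u\cap N_v|$ be the number of their common neighbours. Since $\Gamma$ is orthogonal, the characterization stated just before the lemma gives $\lambda(u,v)=0$. So
\[
0=\lambda(u,v)=\sum_{w\in N_u\cap N_v}\sigma(uw)\,\sigma(vw).
\]

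Each summand $\sigma(uw)\sigma(vw)$ equals $+1$ or $-1$. Let $p$ be the number of common neighbours $w$ whose summand is $+1$, and let $m$ be the number whose summand is $-1$. Then $p+m=c$, and the vanishing of the sum gives $p-m=0$. Hence $c=p+m=2p$, which is even.

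Nothing here is an obstacle. The only points to keep straight are that the sum runs exactly over the common neighbours, and that every term is $\pm1$ because $G$ is simple and $\sigma$ takes only the values $\pm1$. Pairs with no common neighbours give the empty sum, and $c=0$ is even, so they are covered as well. Adjacency of $u$ and $v$ plays no role, since $\lambda(u,v)$ is indexed only by common neighbours.
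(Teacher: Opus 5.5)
Your proof is correct and is the same argument the paper has in mind; the paper states the lemma without proof, as an immediate consequence of $\lambda(u,v)=0$. A vanishing sum of $|N_u\cap N_v|$ terms, each equal to $\pm1$, forces $|N_u\cap N_v|$ to be even.
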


Now we show that all orthogonal signed graphs are regular. Note that essentially the same argument appears in \cite{CS}. We thank Saieed Akbari for bringing this to our attention.

\begin{lemma}\label{regulareven}
Any orthogonal signed graph is regular.
\end{lemma}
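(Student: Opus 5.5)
Since $G$ is connected, it suffices to show that any two adjacent vertices $u\sim v$ have the same degree. We will obtain this by comparing row and column sums of an auxiliary matrix built from $A=A(\Gamma)$ restricted to the neighbourhoods $N_u$ and $N_v$. Orthogonality of $\Gamma$ means $A^2=D$, so $A$ is invertible, and since $A$ is symmetric its rows are mutually orthogonal vectors whose squared lengths are the degrees. The underlying identity is that $A$ is an orthogonal matrix up to scaling of rows: $D^{-1/2}A$ has orthonormal rows, hence it is an orthogonal matrix, and therefore its columns are orthonormal as well.

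First we compute the columns. The column condition says $A D^{-1} A = I$, that is,
\[
\sum_{w} \frac{A_{xw}A_{wy}}{d(w)} = \delta_{xy}.
\]
For $x=y=u$ this gives
\[
\sum_{w\in N_u} \frac{1}{d(w)} = 1 ,
\]
for every vertex $u$. So the reciprocal degrees of the neighbours of each vertex sum to $1$. Next, apply the same argument to the rows: $D^{-1/2}A$ orthogonal is the same as $A D^{-1/2}$ having orthonormal columns, but more usefully, $A^2=D$ together with the symmetry of $A$ gives $AD=A\cdot A^2=A^2\cdot A=DA$. Comparing the $(u,v)$ entries yields $A_{uv}d(v)=d(u)A_{uv}$. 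Whenever $u\sim v$ we have $A_{uv}=\pm1\neq 0$, so $d(u)=d(v)$. Connectivity then gives regularity.

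Thus the entire proof reduces to the commutation $AD=DA$, which follows from $A^3$ being computable in two ways. The first step, with the reciprocal sums, is not actually needed and serves only as a sanity check. The one point to verify carefully is that $A^2=D$ holds as a matrix identity, including the diagonal entries being the degrees. This holds because each row of $A$ has $d(u)$ entries equal to $\pm1$, which also makes clear that the argument does not use Lemma~\ref{Lem:evencommon}. There is no substantial obstacle.
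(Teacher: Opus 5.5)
Your proof is correct, and its decisive step is genuinely different from the paper's.

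The paper argues in two steps:
\begin{itemize}
\item It scales rows to get the orthogonal matrix $B=D^{-1/2}A$ and uses $B^TB=I$ to obtain $\sum_{w\in N_u}1/d(w)=1$, which is exactly the identity you treat as a sanity check.
\item It then runs an extremal argument: at a vertex of maximum degree $k_1$ every summand is at least $1/k_1$, so equality forces all its neighbours to have degree $k_1$, and connectivity propagates this.
\end{itemize}

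You instead note that $A$ commutes with $A^2=D$, so $A_{uv}\bigl(d(v)-d(u)\bigr)=0$ for all $u,v$. This gives equal degrees across every edge directly. It needs no maximum-degree vertex and no appeal to the fact that a square matrix with orthonormal rows also has orthonormal columns; only associativity of matrix multiplication is used. Your route also extends verbatim to any real symmetric matrix with $A^2$ diagonal: the quantities $\sum_w A_{uw}^2$ are then constant on components of the support graph. The paper's sum $\sum_i x_i/k_i$, by contrast, relies on the nonzero entries being $\pm1$.

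In exchange, the paper's argument isolates a purely unsigned combinatorial fact that your commutation argument does not reveal. Any connected graph with $\sum_{w\in N_u}1/d(w)=1$ for every vertex $u$ is regular. Equivalently, if the simple random walk on $G$ has a doubly stochastic transition matrix, then $G$ is regular.
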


\begin{proof}
Let $A$ be the  signed adjacency matrix of an orthogonal signed graph  $\Gamma$. Let $k_1> k_2>\dots> k_m$ be  all distinct vertex degrees of $\Gamma$.
Multiply  each row  of  $A$ by $1/\sqrt{\ell}$ where $\ell$ is the degree of the vertex corresponding to that row and call the resulting matrix $B$. It follows that $BB^T=B^TB=I$, where
$I$ is the identity matrix.
 Let $X_i$ be the set of vertices of degree $k_i$. Let $v\in X_1$ and suppose that $v$ has $x_i$ neighbors in $X_i$. Then, from  $B^TB=I$ and looking at the column corresponding to $v$ in $B$,
  we have
  \begin{align*}
 1&=\frac{x_1}{k_1}+\frac{x_2}{k_2}+\cdots+\frac{x_m}{k_m}\\
 &\geqslant\frac{x_1}{k_1}+\frac{x_2}{k_1}+\cdots+\frac{x_m}{k_1}\\
 &=\frac{x_1+x_2+\cdots+x_m}{k_1}\\
 &=1,
 \end{align*}
which means that  $x_2=x_3=\cdots=x_m=0$. Since the underlying graph of $\Gamma$ is connected, we find that  $X_2=X_3=\cdots=X_m=\varnothing$. Hence,
$\Gamma$ is $k_1$-regular.
\end{proof}

Two signed graphs $\Gamma$ and $\Gamma'$ are called {\sl switching equivalent} if there is a signed permutation
matrix $P$  such that $$A(\Gamma')=P^TA(\Gamma)P.$$
In particular, if we change the sign of all edges incident with a fixed vertex $v$ of  $\Gamma$, then we obtain a switching equivalent copy of $\Gamma$.
We call this operation {\sl switching } at $v$. If $u$ is some vertex of a signed graph, by switching at each neighbor $v$ of $u$ such that $\sigma(u,v)=-1$, we obtain an equivalent signature in which each edge incident with $u$ is positive. In this case we say that the signature is {\sl normalized} at $u$.

\subsection{Weighing matrices}

A {\it   weighing} matrix $W$  of order $n$ and of weight $k$  is a $(-1,0,1)$ square matrix of  order $n$ such that $WW^T=kI$, where $W^T$ is
the transpose of $W$ and $I$ is the identity matrix.
Note that for  any   orthogonal signed graph  $\Gamma$,  $A(\Gamma)$  is a  weighing matrix with the extra conditions that it is symmetric and  zero diagonal.
A weighing matrix of weight $n$ is a Hadamard matrix and a weighing matrix of weight $n-1$ is a conference matrix.
It is well known that if $n\equiv 2 \pmod{4}$,  then any  conference matrix of order $n$ can be made  symmetric and zero diagonal.
Therefore, when $n\equiv 2 \pmod{4}$, orthogonal  signed graphs of the complete graph  $K_n$ are equivalent to conference matrices  of  order $n$.
Via the following construction, orthogonal  signings of the complete bipartite graph  $K_{n,n}$ are equivalent to   Hadamard matrices  of  order $n$.
\begin{definition}\label{defBiW}
For a weighing matrix $W$, we define the {\sl bipartite signed graph of $W$} as the orthogonal signed graph with signed adjacency matrix
$$\begin{bmatrix}
0 & W\\
W^T & 0
\end{bmatrix}.$$
\end{definition}

Two particular weighing matrices are of note. They are denoted $W(7,4)$ and $W(12,5)$ in the literature of weighing matrices, and their bipartite signed graphs will appear in our classification. The underlying graph associated with $W(7,4)$ is the co-Heawood graph: The graph with vertices the points and lines of the Fano plane and adjacency between non-incident point-line pairs.
The underlying graph associated with $W(12,5)$ is the bipartite double of the icosahedron. 


\subsection{Products of signed graphs}\label{Sec:Prod}

The following theorem (whose proof is simply matrix multiplication) may be used recursively to construct infinite families of orthogonal  signed graphs of unbounded degree. It generalizes Theorems  1.2 and 1.3 from \cite{Hong}.
\begin{theorem}\label{recur}
Let $A$ be the signed adjacency matrix of an orthogonal $k$-regular signed  graph on $n$ vertices,  $B$ be  a   weighing matrix of order $n$ and  of  weight $r$  such that $AB=BA$  and  $C$ be  a   weighing matrix of order $m$ and  of  weight $s$.
Then
$$\begin{bmatrix}
I_m\otimes A(\Gamma) & C\otimes B\\
C^T\otimes B^T & -I_m\otimes A(\Gamma)
\end{bmatrix}$$
is  signed adjacency matrix of an orthogonal $(k+rs)$-regular signed  graph on $2mn$ vertices. \qed
\end{theorem}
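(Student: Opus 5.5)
The plan is to verify that the block matrix $M$ in the statement is symmetric with zero diagonal and entries in $\{-1,0,1\}$, to compute $M^2$, and then to deal with connectivity separately. The matrix $M$ has order $2mn$.

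\textbf{Symmetry and entries.} Symmetry is immediate. The diagonal blocks $\pm I_m\otimes A$ are symmetric, and the off-diagonal blocks are $C\otimes B$ and its transpose $C^T\otimes B^T$. The diagonal of $M$ is zero because $A$ has zero diagonal.

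The entries of $M$ lie in $\{0,\pm1\}$, since every entry of a Kronecker product is a product of entries. However, $M$ is the adjacency matrix of a \emph{simple} signed graph only if $I_m\otimes A$ and $C\otimes B$ never need to be summed in one position. They do not: they occupy different blocks.

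\textbf{Computing $M^2$.} We use $A^2=kI_n$, $BB^T=B^TB=rI_n$ (a square weighing matrix is invertible, so $B^TB=rI$ as well), and $CC^T=C^TC=sI_m$. The top-left block is
\[
I_m\otimes A^2+CC^T\otimes BB^T=(k+rs)I_{mn},
\]
and the bottom-right block is $C^TC\otimes B^TB+I_m\otimes A^2=(k+rs)I_{mn}$. The top-right block is
\[
(I_m\otimes A)(C\otimes B)-(C\otimes B)(I_m\otimes A)=C\otimes(AB-BA)=0,
\]
using the mixed-product rule and $AB=BA$. The bottom-left block is its transpose, so it also vanishes. Hence $M^2=(k+rs)I$. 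This shows every vertex has degree $k+rs$ and all rows are pairwise orthogonal.

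\textbf{Connectivity.} The only real subtlety is that the paper's definition of orthogonal requires the underlying graph to be connected.
\begin{itemize}
\item Each copy of $\Gamma$ (one per index $i\le m$ and per side) is connected.
\item The bipartite graph of $C$ links copy $i$ on the first side to copy $j$ on the second side whenever $C_{ij}\ne0$, and $B\ne0$ guarantees that at least one edge actually joins those two copies.
\item So $M$ is connected precisely when the bipartite graph of $C$ is connected, which holds for instance if $C$ has no zeros.
\end{itemize}
In general I would state the conclusion as ``each connected component is orthogonal'', since all components are then $(k+rs)$-regular with orthogonal rows, or add a connectivity hypothesis on $C$. I expect this point to be the main obstacle in making the statement literally correct; the matrix algebra itself is routine.
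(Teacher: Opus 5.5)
Your verification is correct and is exactly the routine Kronecker-product computation the paper has in mind; the paper gives no proof beyond ``simply matrix multiplication'', and your argument supplies the needed facts $B^TB=rI_n$, $C^TC=sI_m$ and $C\otimes(AB-BA)=0$. Your connectivity caveat is also right and goes beyond the paper: for instance, $C=I_m$ with $m\geqslant 2$ gives $m$ disjoint copies of the $m=1$ graph, so the statement as written needs an extra hypothesis such as connectivity of the bipartite graph of $C$ (this holds automatically in Corollary~\ref{cartesian}, where that bipartite graph is the connected graph $\Gamma'$).
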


Letting $B=I$ above, we obtain the following.
\begin{corollary}[\cite{Hong}]\label{cartesian}
If $\Gamma$ is an orthogonal  $k$-regular signed  graph on $m$ vertices   and  $\Gamma'$ is an orthogonal $\ell$-regular bipartite signed     graph  on  $n$ vertices with signed adjacency matrix
$\begin{bmatrix}
0 & C\\
C^T & 0
\end{bmatrix}$, then
$$\begin{bmatrix}
I_n\otimes A(\Gamma) & C\otimes I_m\\
C^T\otimes I_m & -I_n\otimes A(\Gamma)
\end{bmatrix}$$
is  signed adjacency matrix of an orthogonal $(k+\ell)$-regular signed  graph on $2mn$ vertices, denoted by $\Gamma\square \Gamma'$. \qed
\end{corollary}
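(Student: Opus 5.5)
The plan is to obtain Corollary~\ref{cartesian} as the special case $B=I_m$ of Theorem~\ref{recur}, and to check directly that the resulting matrix is a signed adjacency matrix of a connected graph. First, apply Theorem~\ref{recur} with $A=A(\Gamma)$ (order $m$, weight $k$), $B=I_m$ (a weighing matrix of order $m$ and weight $r=1$, commuting with every matrix), and $C$ the $n/2\times n/2$ block of $A(\Gamma')$. Since $\Gamma'$ is orthogonal and $\ell$-regular, $A(\Gamma')^2=\ell I$, so $CC^T=\ell I$ and $C$ is a weighing matrix of weight $s=\ell$. The roles of $m$ and $n$ swap relative to Theorem~\ref{recur}, and the displayed matrix uses $I_n$ where $I_{n/2}$ is meant; I will read the block sizes so the product is well defined. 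Alternatively, I will verify directly by block multiplication:
\[
M^2=\begin{bmatrix} I\otimes A^2 + CC^T\otimes I & (I\otimes A)(C\otimes I)-(C\otimes I)(I\otimes A)\\ \ast & C^TC\otimes I + I\otimes A^2\end{bmatrix}.
\]
The off-diagonal block vanishes because $C\otimes A$ appears with both signs, and $A^2=kI$, $CC^T=C^TC=\ell I$ give $M^2=(k+\ell)I$.

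Next, I will check that $M$ really is a signed adjacency matrix. It is symmetric because $A$ is symmetric, and its diagonal is zero because $A$ has zero diagonal. Its entries lie in $\{0,\pm1\}$ because the supports of $I\otimes A$ and $C\otimes I$ sit in disjoint blocks and each Kronecker product of $(0,\pm1)$ matrices is again $(0,\pm1)$.

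The only step beyond the algebra, and the one I expect to need some care, is connectivity, which is part of the definition of orthogonality. The underlying graph of $M$ is the Cartesian product of the underlying graphs of $\Gamma$ and $\Gamma'$, with vertex $(i,x)$ adjacent to $(i,y)$ when $x\sim y$ in $\Gamma$ and to $(j,x)$ when $i\sim j$ in $\Gamma'$. A Cartesian product of connected graphs is connected, since one can move within a $\Gamma$-fiber and then within a $\Gamma'$-fiber. Regularity of degree $k+\ell$ then also follows from the diagonal of $M^2$.
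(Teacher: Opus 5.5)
Your proposal is correct and follows the paper's route: the paper obtains this corollary simply by setting $B=I$ in Theorem~\ref{recur}, whose proof is the same block multiplication you write out. Your explicit connectivity check via the Cartesian product structure is a worthwhile addition that the paper leaves implicit, and you are right that the block sizes do not match as written. Note, however, that reading $I_{n/2}$ gives $mn$ vertices, so the reading consistent with the stated count $2mn$ is that $\Gamma'$ has $2n$ vertices and $C$ is $n\times n$.
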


A simple but important application of this corollary is when  $\Gamma'=K_2$.  We let $\Gamma \square K_2$ denote the signed graph obtained in this case and note that its underlying graph is the Cartesian product $G\square K_2$. For example, starting with $\Gamma=K_1$ and applying this construction recursively $n$ times, we construct the orthogonal signed $n$-dimensional hypercube
which we denote by $\overline{Q_n}$.

The next lemma shows that this Cartesian product with $K_2$ is the only way for a $(k-1)$-regular orthogonal signed graph to occur as an induced subgraph of a $k$-regular orthogonal signed graph. It will be used shortly in our classification of $5$-regular orthogonal signed graphs.

\begin{lemma} \label{Lem:k-1_reg_ind}
    Suppose $\Gamma$ is a $k$-regular orthogonal signed graph which contains as a subgraph the $(k-1)$-regular orthogonal signed graph $\Delta$. Then either $V(\Gamma)=V(\Delta)$ or $\Gamma$ is switching equivalent to $\Delta\square K_2$.
\end{lemma}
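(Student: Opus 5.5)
Assume $V(\Gamma)\neq V(\Delta)$; we show $\Gamma\cong\Delta\square K_2$ up to switching. Because $\Gamma$ is connected, every vertex of $\Delta$ has degree $k-1$ in $\Delta$, and every vertex of $\Gamma$ has degree $k$, each $v\in V(\Delta)$ has exactly one neighbour $v'$ outside $V(\Delta)$. By switching at vertices outside $\Delta$ we may assume each edge $vv'$ is positive, at least for one chosen $v'$ per outside vertex.

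The first step is to show that $v\mapsto v'$ is injective. Suppose $u\neq v$ in $\Delta$ have $u'=v'=w$. By Lemma~\ref{Lem:evencommon} and orthogonality,
\[
\lambda(u,v)=\sum_{x\in N_u\cap N_v\cap V(\Delta)}\sigma(ux)\sigma(vx)+\sigma(uw)\sigma(vw)=0.
\]
Since $\Delta$ is itself orthogonal, the sum over common neighbours inside $\Delta$ is $0$. Hence $\sigma(uw)\sigma(vw)=0$, which is impossible. So the map is injective, and if $u\neq v$ then $u'\neq v'$.

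Next, let $U=\{v':v\in V(\Delta)\}$. Every $w\in U$ has exactly one neighbour in $\Delta$ and therefore $k-1$ further neighbours. I want to show these lie in $U$ and reproduce $\Delta$ with the sign pattern $-A(\Delta)$. Take an edge $uv$ of $\Delta$. The vertices $u$ and $v'$ share the neighbour $v$, and their other possible common neighbours are: vertices of $\Delta$ (only $v$ among $\Delta$'s vertices is adjacent to $v'$) and $u'$. By Lemma~\ref{Lem:evencommon} they must have an even number of common neighbours, so $u'\sim v'$. Moreover
\[
\lambda(u,v')=\sigma(uv)\sigma(vv')+\sigma(uu')\sigma(u'v')=0,
\]
which gives $\sigma(u'v')=-\sigma(uv)$ under the normalisation $\sigma(vv')=\sigma(uu')=1$. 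Thus $U$ contains a copy of $\Delta$ with negated signs. This uses $k-1$ edges at each $w\in U$, and together with the edge back to $\Delta$ that accounts for all $k$ edges. So $U\cup V(\Delta)$ is closed under adjacency, and connectivity of $\Gamma$ gives $V(\Gamma)=V(\Delta)\cup U$.

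Finally, the signed adjacency matrix in the ordering $(V(\Delta),U)$ is
\[
\begin{bmatrix} A(\Delta) & I\\ I & -A(\Delta)\end{bmatrix},
\]
which is exactly $\Delta\square K_2$ from Corollary~\ref{cartesian}.

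The main obstacle is the consistency of the normalisation: each outside vertex is switched only once, and since $v\mapsto v'$ is a bijection onto $U$, each $v'$ has a unique $\Delta$-neighbour, so all the edges $vv'$ can be made positive at the same time. I would handle this step with care, because it is what the injectivity argument is really needed for.
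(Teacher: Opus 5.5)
Your overall strategy is the same as the paper's: the matching between $V(\Delta)$ and the outside, the $4$-cycles $u,v,v',u'$, the sign relation $\sigma(u'v')=-\sigma(uv)$, and closure plus connectivity. Your injectivity argument, sign computation and final matrix are fine. The gap is in your first sentence. The claim that every $v\in V(\Delta)$ has exactly one neighbour outside $V(\Delta)$ does not follow from connectivity of $\Gamma$ and degree counting. Degree counting only says that each $v$ has exactly one edge of $\Gamma$ not in $\Delta$. Since $\Delta$ is only assumed to be a subgraph, not an induced one, that edge may join $v$ to another vertex of $\Delta$. Connectivity of $\Gamma$ only guarantees that \emph{some} vertex of $\Delta$ has an outside neighbour. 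This is not a technicality. The alternative $V(\Gamma)=V(\Delta)$ in the statement occurs precisely when $\Delta$ is not induced: if $\Delta$ were induced and spanning, $\Gamma$ would be $(k-1)$-regular. So showing that $\Delta$ is induced whenever $V(\Gamma)\neq V(\Delta)$ is part of the content of the lemma, and your argument assumes it.

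The repair uses your own computations in a different order. First establish the matching property with your injectivity argument. That argument is valid as it stands, since it only needs two vertices $u\neq v$ of $\Delta$ that both spend their single extra edge on the same outside vertex $w$. Next, connectivity of $\Gamma$ gives one $u\in V(\Delta)$ with an outside neighbour $u'$. Take any $v$ adjacent to $u$ in $\Delta$. Then $u$ is a common neighbour of $v$ and $u'$, so by Lemma~\ref{Lem:evencommon} there is another common neighbour $z$. By the matching property $z\notin V(\Delta)$, so $vz$ is the extra edge of $v$; hence $v$'s extra neighbour lies outside $V(\Delta)$ and is adjacent to $u'$. Connectivity of $\Delta$ propagates this to every vertex of $\Delta$, which is exactly how the paper proceeds. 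After that, your second step and the normalisation go through as written. The normalisation you flag as the main obstacle is routine once the matching is known; the real subtle point is the one above.
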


\begin{proof}
    Let $X=V(\Delta)$ and suppose $Y=V(\Gamma)\setminus X$ is nonempty. First note that $E(X,Y)$ is a matching. Indeed, since $\Delta$ is $(k-1)$-regular it is clear that no two edges of $E(X,Y)$ are incident in $X$. If two edges of $E(X,Y)$ were incident in $Y$, then their ends in $X$ would have an odd number of common neighbors in $\Gamma$, a contradiction.

    Since $\Gamma$ is connected and $Y$ is nonempty, there exists $u\in X$ with a neighbor $u'\in Y$. For each $v\in X$ such that $v\sim u$, there must exist at least one more 2-walk from $v$ to $u'$ to balance with the 2-walk through $u$. Since $E(X,Y)$ is a matching, the midpoint of such a 2-walk must be a vertex in $Y$; call it $v'$. Moreover $(u,v,v',u')$ is a 4-cycle. Since $\Delta$ is connected, this implies that for each vertex $u\in X$ there is a unique vertex $u'\in Y$ with $u\sim u'$. Moreover, for each edge $u\sim v$ of $\Delta$ there is an edge $u'\sim v'$ in $\Gamma[Y]$. It follows that the subgraph induced by $\{v'\in Y: v\in X\}$ is $(k-1)$-regular and isomorphic to the subgraph induced by $\Gamma[X]$. In other words, $G=\Delta\square K_2$.

    By switching so that each edge of $E(X,Y)$ is positive, we construct a switching equivalent signed graph in which the signature of $\Delta$ must be the opposite of the signature of $\Gamma[Y]$, hence $\Gamma$ is switching equivalent to $\Delta\square K_2$.
\end{proof}

\subsection{Folded cubes}

The folded $n$-dimensional hypercube $Q_n^{+}$ is obtained from the $n$-dimensional hypercube $Q_n$ by adding an edge between
every pair of vertices at maximal distance.  Here is another example of orthogonal signed graphs.

\begin{theorem}[Alon and Zheng \cite{Al}]\label{folded}
There exists an orthogonal signed graph with underlying graph  $Q_n^+$ if and only if $n\equiv 0,3  \pmod{4}$. \qed
\end{theorem}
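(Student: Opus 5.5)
We prove sufficiency by an explicit construction built from anticommuting signed permutation matrices (Pauli strings), and necessity by a parity count of $4$-cycles along a closed walk. Throughout we view $Q_n^+$ as the Cayley graph of $\mathbb{F}_2^n$ with connection set $g_1=e_1,\dots,g_n=e_n$ and $g_{n+1}=\mathbf{1}$ (the all-ones vector). It is $(n+1)$-regular, and $g_1+\dots+g_{n+1}=0$. We assume $n\geqslant 3$, since $Q_1^+$ is not simple and $Q_2^+\cong K_4$ has no orthogonal signing. The condition $n\equiv 0,3 \pmod 4$ is exactly the condition that $n(n+1)/2$ is even, and both directions reduce to this quantity.

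\emph{Sufficiency.} A signing of $Q_n^+$ amounts to writing $A=\sum_{i=1}^{n+1}M_i$, where $M_i$ is the permutation matrix of translation by $g_i$ multiplied by a $\pm1$ diagonal matrix. Since the $M_i$ have disjoint supports, $A$ is symmetric exactly when each $M_i$ is symmetric. In that case $A^2=(n+1)I$ holds if and only if $M_i^2=I$ and $M_iM_j+M_jM_i=0$ for $i\neq j$.

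I will take each $M_i$ to be a real Pauli string $X^{a}Z^{b}:=\bigotimes_t X^{a_t}Z^{b_t}$ acting on $(\mathbb{R}^2)^{\otimes n}$, where $X=\left[\begin{smallmatrix}0&1\\1&0\end{smallmatrix}\right]$ and $Z=\mathrm{diag}(1,-1)$. The string $X^aZ^b$ is a signed translation by $a$. It is symmetric, and hence an involution, iff $a\cdot b$ is even. Two strings $X^aZ^b$ and $X^{a'}Z^{b'}$ anticommute iff $a\cdot b'+a'\cdot b$ is odd.

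Fix any tournament $T$ on $[n]$. For $i\leqslant n$ put $M_i=X^{e_i}Z^{b_i}$, where $b_i$ is the indicator vector of the out-neighbourhood of $i$ in $T$. Then $e_i\cdot b_i=0$, so $M_i$ is symmetric. For $i\neq j$ we have $e_i\cdot b_j+e_j\cdot b_i=1$, so $M_i$ and $M_j$ anticommute.

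Put $M_{n+1}=X^{\mathbf 1}Z^{c}$ with $c_i\equiv d_i+1 \pmod 2$, where $d_i$ is the out-degree of $i$ in $T$. Anticommutation with $M_i$ requires $\mathbf 1\cdot b_i+e_i\cdot c=d_i+c_i$ to be odd, which holds by the choice of $c$. Symmetry of $M_{n+1}$ requires $|c|$ to be even, and
$$|c|\equiv\sum_i d_i+n=\binom n2+n=\frac{n(n+1)}2 \pmod 2 .$$
So the construction succeeds precisely when $n\equiv0,3\pmod 4$.

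\emph{Necessity.} For $n\geqslant3$, the sums $g_i+g_j$ with $i\neq j$ are pairwise distinct and nonzero. (For $n=3$, $e_i+\mathbf 1$ is a single weight-$2$ vector that also equals $e_j+e_k$ for $\{i,j,k\}=\{1,2,3\}$; then $x$ and $x+e_i+\mathbf 1$ have four common neighbours, forming two $4$-cycles, and these have to be handled by a short direct check.)

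Hence, for $n\geqslant 4$, two vertices at distance $2$ have exactly the two common neighbours $x+g_i$ and $x+g_j$. The condition $\lambda=0$ then forces every $4$-cycle $x,\,x+g_i,\,x+g_i+g_j,\,x+g_j$ to be negative.

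For a permutation $\pi$ of $[n+1]$, consider the closed walk $W_\pi$ from $x$ that uses the steps $g_{\pi(1)},\dots,g_{\pi(n+1)}$ in order. Swapping two consecutive steps replaces two edges of the walk by the other two edges of such a $4$-cycle. Therefore it multiplies the sign of the walk by $-1$.

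Reversing the order of the steps takes $\binom{n+1}2$ adjacent swaps. On the other hand, because $\sum_i g_i=0$, the reversed walk from $x$ traverses exactly the same edges as $W_{\mathrm{id}}$ backwards, so it has the same sign. Hence $(-1)^{n(n+1)/2}=1$, which gives $n\equiv 0,3 \pmod 4$.

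The main obstacle is making the small cases rigorous, namely the coincidences among the $g_i+g_j$ when $n=3$ and the degenerate cases $n\leqslant 2$. Everything else is routine matrix bookkeeping.
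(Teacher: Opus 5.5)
The paper does not prove this statement. It is quoted from Alon and Zheng \cite{Al} without proof, so there is no in-paper argument to compare against. Your proposal supplies a correct, self-contained proof.

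\textbf{Sufficiency.} This is a genuine explicit construction. Symmetric signed translations $M_i$ that square to $I$ and pairwise anticommute give $A^2=(n+1)I$. Your tournament choice $M_i=X^{e_i}Z^{b_i}$, completed by $M_{n+1}=X^{\mathbf 1}Z^{c}$, works exactly when $\binom n2+n$ is even. I checked the following, and they are right:
\begin{itemize}
\item the symmetry criterion ($a\cdot b$ even) and the anticommutation criterion ($a\cdot b'+a'\cdot b$ odd) for real Pauli strings;
\item that the supports are disjoint, since the $g_i$ are distinct and nonzero;
\item the parity count $|c|\equiv n(n+1)/2$.
\end{itemize}

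\textbf{Necessity.} This is a clean parity argument. As long as the sums $g_i+g_j$ ($i\neq j$) are pairwise distinct, $\lambda=0$ forces every $4$-cycle $y,\,y+g_a,\,y+g_a+g_b,\,y+g_b$ to be negative. Each adjacent swap of steps then flips the sign of the closed walk. Full reversal costs $\binom{n+1}2$ swaps but leaves the multiset of traversed edges unchanged, since $\sum g_i=0$.

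Compared with the bare citation, this makes the paper self-contained. It also gives an explicit signing $\overline{Q_n^+}$, which is what the paper actually uses in case $(1.1)$.

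\textbf{Small cases.} The ``main obstacle'' you flag does not exist.
\begin{itemize}
\item For $n=3$, the conclusion $n\equiv 0,3\pmod 4$ holds trivially, so necessity needs no argument. The coincidence $e_i+\mathbf 1=e_j+e_k$ is therefore irrelevant. Sufficiency at $n=3$ is covered by your construction ($Q_3^+\cong K_{4,4}$).
\item For $n=2$, the three sums $g_1+g_2,\,g_1+g_3,\,g_2+g_3$ equal $e_1+e_2,\,e_2,\,e_1$, which are distinct and nonzero. So $x$ and $x+g_i+g_j$ have exactly the two common neighbours $x+g_i,\,x+g_j$; that they are adjacent rather than at distance $2$ does not matter. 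Your walk argument then runs verbatim and would force $(-1)^3=1$, so $K_4$ needs no separate check.
\end{itemize}
Hence the walk argument covers every $n\geqslant 2$ except $n=3$, which is exactly what is required. Only $n=1$ must be excluded by convention: read as a simple graph, $Q_1^+=K_2$ is orthogonal.

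\textbf{One overstatement.} The ``only if'' half of your reduction ``$A^2=(n+1)I$ iff $M_i^2=I$ and $M_iM_j+M_jM_i=0$'' is false for $n=3$. There $g_1+g_4=g_2+g_3$, so only $M_1M_4+M_4M_1+M_2M_3+M_3M_2$ must vanish. You only use the ``if'' half, so nothing breaks.
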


We denote the orthogonal signed  folded $n$-dimensional  hypercube given in \cite{Al} by $\overline{Q_n^+}$.


\subsection{The $k=4$ classification}

Keeping in mind the product constructions from Section \ref{Sec:Prod}, it is clear that orthogonal signed graphs of unbounded valency are abundant. The remaining interesting question is the existence and abundance of infinite families of a fixed degree, which brings us to  classification theorems. 

For each $k\leqslant3$, there is a unique orthogonal signed graph.  These are  $\overline{Q_1}$, $\overline{Q_2}$ and $\overline{Q_3}$ as defined above.
The case  $k=4$ was dealt with in \cite{MK}. Other proofs are also given in \cite{Gh,Ho}. To present the result, we need to define the following family of graphs which will also be relevant for our $k=5$ classification.

\begin{definition}\label{defT2n} (cf.~Figure \ref{fig:T,TP,TS}).
We define the  family of orthogonal $4$-regular signed graphs $T_{n}$ for $n\geqslant 3$. The vertex set is $V=\{0,1\}\times\{0,1,\ldots,n-1\}$.
 Each vertex $(0,j)\in V$ is joined to
$(0,j+1)$ and  $(1,j+1)$ with sign 1.  Also,   each vertex $(1,j)\in V$ is joined to
$(0,j+1)$ and  $(1,j+1)$ with sign $-1$.
All computations are modulo $n$.
It is easy to check that $T_{n}$ is in fact orthogonal.
\end{definition}

\begin{figure}
    \centering
\begin{tikzpicture}[scale=1.0]
  \begin{scope}[xshift=0.0cm]
    \node[vtx] (T0) at (0.000,1.150) {};
    \node[vtx] (T1) at (-0.813,0.813) {};
    \node[vtx] (T2) at (-1.150,0.000) {};
    \node[vtx] (T3) at (-0.813,-0.813) {};
    \node[vtx] (T4) at (-0.000,-1.150) {};
    \node[vtx] (T5) at (0.813,-0.813) {};
    \node[vtx] (T6) at (1.150,-0.000) {};
    \node[vtx] (T7) at (0.813,0.813) {};
    \node[vtx] (T8) at (0.000,2.000) {};
    \node[vtx] (T9) at (-1.414,1.414) {};
    \node[vtx] (T10) at (-2.000,0.000) {};
    \node[vtx] (T11) at (-1.414,-1.414) {};
    \node[vtx] (T12) at (-0.000,-2.000) {};
    \node[vtx] (T13) at (1.414,-1.414) {};
    \node[vtx] (T14) at (2.000,-0.000) {};
    \node[vtx] (T15) at (1.414,1.414) {};
    \draw[en] (T0) -- (T1);
    \draw[en] (T0) -- (T7);
    \draw[en] (T0) -- (T9);
    \draw[ep] (T0) -- (T15);
    \draw[en] (T1) -- (T2);
    \draw[ep] (T1) -- (T8);
    \draw[en] (T1) -- (T10);
    \draw[en] (T2) -- (T3);
    \draw[ep] (T2) -- (T9);
    \draw[en] (T2) -- (T11);
    \draw[en] (T3) -- (T4);
    \draw[ep] (T3) -- (T10);
    \draw[en] (T3) -- (T12);
    \draw[en] (T4) -- (T5);
    \draw[ep] (T4) -- (T11);
    \draw[en] (T4) -- (T13);
    \draw[en] (T5) -- (T6);
    \draw[ep] (T5) -- (T12);
    \draw[en] (T5) -- (T14);
    \draw[en] (T6) -- (T7);
    \draw[ep] (T6) -- (T13);
    \draw[en] (T6) -- (T15);
    \draw[en] (T7) -- (T8);
    \draw[ep] (T7) -- (T14);
    \draw[ep] (T8) -- (T9);
    \draw[ep] (T8) -- (T15);
    \draw[ep] (T9) -- (T10);
    \draw[ep] (T10) -- (T11);
    \draw[ep] (T11) -- (T12);
    \draw[ep] (T12) -- (T13);
    \draw[ep] (T13) -- (T14);
    \draw[ep] (T14) -- (T15);
    \node at (0,-2.7) {\large $T_8$};
  \end{scope}
  \begin{scope}[xshift=5.6cm]
    \node[vtx] (P0) at (0.000,1.150) {};
    \node[vtx] (P1) at (-0.813,0.813) {};
    \node[vtx] (P2) at (-1.150,0.000) {};
    \node[vtx] (P3) at (-0.813,-0.813) {};
    \node[vtx] (P4) at (-0.000,-1.150) {};
    \node[vtx] (P5) at (0.813,-0.813) {};
    \node[vtx] (P6) at (1.150,-0.000) {};
    \node[vtx] (P7) at (0.813,0.813) {};
    \node[vtx] (P8) at (0.000,2.000) {};
    \node[vtx] (P9) at (-1.414,1.414) {};
    \node[vtx] (P10) at (-2.000,0.000) {};
    \node[vtx] (P11) at (-1.414,-1.414) {};
    \node[vtx] (P12) at (-0.000,-2.000) {};
    \node[vtx] (P13) at (1.414,-1.414) {};
    \node[vtx] (P14) at (2.000,-0.000) {};
    \node[vtx] (P15) at (1.414,1.414) {};
    \draw[en] (P0) -- (P1);
    \draw[en] (P0) -- (P7);
    \draw[ep] (P0) -- (P8);
    \draw[en] (P0) -- (P9);
    \draw[ep] (P0) -- (P15);
    \draw[en] (P1) -- (P2);
    \draw[ep] (P1) -- (P8);
    \draw[ep] (P1) -- (P9);
    \draw[en] (P1) -- (P10);
    \draw[en] (P2) -- (P3);
    \draw[ep] (P2) -- (P9);
    \draw[ep] (P2) -- (P10);
    \draw[en] (P2) -- (P11);
    \draw[en] (P3) -- (P4);
    \draw[ep] (P3) -- (P10);
    \draw[ep] (P3) -- (P11);
    \draw[en] (P3) -- (P12);
    \draw[en] (P4) -- (P5);
    \draw[ep] (P4) -- (P11);
    \draw[ep] (P4) -- (P12);
    \draw[en] (P4) -- (P13);
    \draw[en] (P5) -- (P6);
    \draw[ep] (P5) -- (P12);
    \draw[ep] (P5) -- (P13);
    \draw[en] (P5) -- (P14);
    \draw[en] (P6) -- (P7);
    \draw[ep] (P6) -- (P13);
    \draw[ep] (P6) -- (P14);
    \draw[en] (P6) -- (P15);
    \draw[en] (P7) -- (P8);
    \draw[ep] (P7) -- (P14);
    \draw[ep] (P7) -- (P15);
    \draw[ep] (P8) -- (P9);
    \draw[ep] (P8) -- (P15);
    \draw[ep] (P9) -- (P10);
    \draw[ep] (P10) -- (P11);
    \draw[ep] (P11) -- (P12);
    \draw[ep] (P12) -- (P13);
    \draw[ep] (P13) -- (P14);
    \draw[ep] (P14) -- (P15);
    \node at (0,-2.7) {\large $T_8^+$};
  \end{scope}
  \begin{scope}[xshift=11.2cm]
    \node[vtx] (S0) at (0.000,1.150) {};
    \node[vtx] (S1) at (-0.813,0.813) {};
    \node[vtx] (S2) at (-1.150,0.000) {};
    \node[vtx] (S3) at (-0.813,-0.813) {};
    \node[vtx] (S4) at (-0.000,-1.150) {};
    \node[vtx] (S5) at (0.813,-0.813) {};
    \node[vtx] (S6) at (1.150,-0.000) {};
    \node[vtx] (S7) at (0.813,0.813) {};
    \node[vtx] (S8) at (0.000,2.000) {};
    \node[vtx] (S9) at (-1.414,1.414) {};
    \node[vtx] (S10) at (-2.000,0.000) {};
    \node[vtx] (S11) at (-1.414,-1.414) {};
    \node[vtx] (S12) at (-0.000,-2.000) {};
    \node[vtx] (S13) at (1.414,-1.414) {};
    \node[vtx] (S14) at (2.000,-0.000) {};
    \node[vtx] (S15) at (1.414,1.414) {};
    \draw[en] (S0) -- (S1);
    \draw[en] (S0) -- (S7);
    \draw[en] (S0) -- (S9);
    \draw[ep] (S0) to[bend left=32] (S12);
    \draw[ep] (S0) -- (S15);
    \draw[en] (S1) -- (S2);
    \draw[ep] (S1) -- (S8);
    \draw[en] (S1) -- (S10);
    \draw[ep] (S1) to[bend left=32] (S13);
    \draw[en] (S2) -- (S3);
    \draw[ep] (S2) -- (S9);
    \draw[en] (S2) -- (S11);
    \draw[ep] (S2) to[bend left=32] (S14);
    \draw[en] (S3) -- (S4);
    \draw[ep] (S3) -- (S10);
    \draw[en] (S3) -- (S12);
    \draw[ep] (S3) to[bend left=32] (S15);
    \draw[en] (S4) -- (S5);
    \draw[ep] (S4) to[bend left=32] (S8);
    \draw[ep] (S4) -- (S11);
    \draw[en] (S4) -- (S13);
    \draw[en] (S5) -- (S6);
    \draw[ep] (S5) to[bend left=32] (S9);
    \draw[ep] (S5) -- (S12);
    \draw[en] (S5) -- (S14);
    \draw[en] (S6) -- (S7);
    \draw[ep] (S6) to[bend left=32] (S10);
    \draw[ep] (S6) -- (S13);
    \draw[en] (S6) -- (S15);
    \draw[en] (S7) -- (S8);
    \draw[ep] (S7) to[bend left=32] (S11);
    \draw[ep] (S7) -- (S14);
    \draw[ep] (S8) -- (S9);
    \draw[ep] (S8) -- (S15);
    \draw[ep] (S9) -- (S10);
    \draw[ep] (S10) -- (S11);
    \draw[ep] (S11) -- (S12);
    \draw[ep] (S12) -- (S13);
    \draw[ep] (S13) -- (S14);
    \draw[ep] (S14) -- (S15);
    \node at (0,-2.7) {\large $T_8^{\ast}$};
  \end{scope}
    \end{tikzpicture}
    \caption{An $4$-regular orthogonal signed graph (left) and two orthogonal $5$-regular signed graphs.}
    \label{fig:T,TP,TS}
\end{figure}

\begin{remark}\label{remT2n}
Let $\Gamma=T_{n}$. Let us interchange the labels of vertices $(0,j)$ and $(1,j)$ in $\Gamma$. Let us do this also for the pair $(0,j+1)$ and $(1,j+1)$.
Then we do switching  at both vertices $(0,j+1)$ and $(1,j+1)$.  We find that  $\Gamma=T_{n}$, unchanged. This property of  $T_{n}$ will be used later.
\end{remark}

\begin{theorem}[\cite{Gh,Ho,MK}]\label{4reg}
Any orthogonal $4$-regular signed graph is switching equivalent to one of the following.
\begin{itemize}
\item[$(1)$] $\overline{Q_4}$ (signed hypercube),
\item[$(2)$] $F$, the bipartite signed graph of $W(7,4)$ (see Definition \ref{defBiW}),
\item[$(3)$] $T_{n}$ for $n\geqslant 3$  (see Definition \ref{defT2n}).
\end{itemize}
\end{theorem}

\section{Orthogonal $5$-regular signed graphs}\label{Sec:k=5}
In this section  we classify orthogonal $5$-regular signed graphs up to switching equivalence.
In \cite{St}, a characterization, but not a complete classification,  of such signed graphs is given. The characterization there relies on a previous classification of weighing matrices of weight 5 \cite{CRS86,HM12}. Our proof does not rely on this classification of weighing matrices. So one can obtain a new proof of the classification of weight 5 weighing matrices from our result.

More importantly, we identify a new infinite family of examples of orthogonal signed graphs (Definition \ref{defTast}) which were, to our knowledge, not previously known. We begin by defining the pertinent infinite families.

\begin{definition}\label{defT+} (cf.~Figure \ref{fig:T,TP,TS}).
We define the  family of orthogonal $5$-regular signed graphs $T^+_{n}$ for $n\geqslant 3$. Start with $T_{n}$ and then add a perfect matching by joining
each vertex $(0,j)$  to
$(1,j)$ with sign $1$. 
\end{definition}

\begin{definition}\label{defTast} (cf.~Figure \ref{fig:T,TP,TS}).
We define the  family of orthogonal $5$-regular signed graphs $T^{\ast}_{2n}$ for $n\geqslant 2$. Start with $T_{2n}$ and  then  add a perfect matching as follows.
For $j=0,1,\ldots,n-1$, join $(0,j)$  to $(1,j+n)$ and $(1,j)$  to $(0,j+n)$ both with sign 1.
\end{definition}

\begin{lemma}\label{remcases}
The only isomorphism between elements of the three infinite families  $T_{n}\square K_2,T^+_{n},T^{\ast}_{2n}$ is  $T^+_{4}\simeq T^{\ast}_{4}$. 
\end{lemma}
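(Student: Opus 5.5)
We separate the three families by graph invariants of the underlying graphs that are preserved under isomorphism (switching does not change the underlying graph), and then exhibit the one coincidence directly. First, vertex counts: $T_n\square K_2$ has $4n$ vertices, $T_n^+$ has $2n$, and $T_{2n}^\ast$ has $4n$. So any isomorphism forces equal orders, and within a family the order determines the member.

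The main invariant will be local structure around an edge, namely triangles and the number of common neighbours. In $T_n$ (for $n\ge 4$) the graph is triangle-free, since every edge joins column $j$ to column $j+1$, so a triangle would need three consecutive columns to close up modulo $n$, which requires $n=3$. Consequently:
\begin{itemize}
\item $T_n\square K_2$ is triangle-free for $n\ge4$, since the Cartesian product of triangle-free graphs is triangle-free.
\item In $T_n^+$ the matching edge $(0,j)(1,j)$ lies in triangles, because $(0,j)$ and $(1,j)$ have the same neighbours $(0,j\pm1),(1,j\pm1)$. These are four common neighbours, so each matching edge lies in $4$ triangles.
\item In $T_{2n}^\ast$ the matching edges join column $j$ to column $j+n$. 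For $n\ge3$ these columns are at distance at least $2$ apart (cyclically), so no triangle uses a matching edge, and the graph is triangle-free for $2n\ge 6$.
\end{itemize}
Thus $T_n^+$ ($n\ge4$) is distinguished from the other two families. For $n=3$, $T_3^+$ has only $6$ vertices and is $K_6$, while $T_3\square K_2$ and $T_6^\ast$ have $12$ vertices; the small cases of $T_{2n}^\ast$ are handled by order.

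It remains to separate $T_n\square K_2$ from $T_{2n}^\ast$, both having $4n$ vertices and both triangle-free for $n\ge3$. We use the number of $4$-cycles through an edge, or equivalently the multiset of common-neighbour counts over pairs at distance $2$.
\begin{itemize}
\item In $T_n\square K_2$, an edge $(x,0)(x,1)$ of the $K_2$ factor lies in exactly $4$ four-cycles (one per $T_n$-neighbour of $x$). An edge inside a copy of $T_n$ lies in the $T_n$-internal $4$-cycles plus one more from the product square.
\item In $T_{2n}^\ast$, I would count directly. The endpoints of a matching edge $(0,j)(1,j+n)$ have neighbours in columns $j\pm1$ and $j+n\pm1$. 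A $4$-cycle through this edge needs an edge between these neighbour sets. For $n\ge3$ the only such edges come from the matching, joining column $j+1$ to column $j+1+n$, and there are exactly $2$ such $4$-cycles.
\end{itemize}
So the matching edges lie in $2$ four-cycles, while every edge of $T_n\square K_2$ lies in at least $4$ (and $T_n$ edges lie in many $4$-cycles, since consecutive columns induce $K_{2,2}$). I expect this to be the main obstacle: the counts must be checked carefully, including small $n$ where columns wrap around ($n=3$ for $T_{2n}^\ast$, i.e.\ $T_6^\ast$, and $T_3\square K_2$). For those finitely many small cases I would fall back on a direct check, for instance counting edges in $K_{2,2}$-blocks or computing the number of $4$-cycles.

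Finally, for the exceptional coincidence $T_4^+\simeq T_4^\ast$, both graphs are on $8$ vertices and are $5$-regular. In $T_4^\ast$ the matching joins $(0,j)$ to $(1,j+2)$. Relabelling by applying the symmetry of Remark \ref{remT2n} (swap $(0,j)\leftrightarrow(1,j)$) to columns $2$ and $3$ turns these into the edges $(0,j)(1,j)$, and the signs can then be adjusted by switching. Thus $T_4^\ast$ is in fact switching equivalent to $T_4^+$. The remaining small orders are covered by the vertex-count argument, which also shows that within each family distinct parameters give non-isomorphic graphs.
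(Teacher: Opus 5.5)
\textbf{There are genuine gaps.} The main one is your separation of $T_n\square K_2$ from $T^\ast_{2n}$, which fails. Your count for a matching edge $(0,j)(1,j+n)$ of $T^\ast_{2n}$ misses the cycles on the column-$(j-1)$ side. For each $x\in\{0,1\}$ and $\delta=\pm1$, the vertices $(0,j),(1,j+n),(1-x,j+n+\delta),(x,j+\delta)$ form a $4$-cycle; its second matching edge is $(x,j+\delta)(1-x,j+n+\delta)$. So each matching edge lies in four $4$-cycles, not two. That is exactly as many as a $K_2$-edge of $T_n\square K_2$.

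No refinement of this local count can rescue the argument. For $n\ge 5$ the two graphs look identical locally:
\begin{itemize}
\item Both have $2n$ matching edges, each in exactly four $4$-cycles.
\item Both have $8n$ remaining edges, each in exactly six $4$-cycles (five inside the copy of $T$, plus one through the matching).
\item Both are triangle-free, with $2n$ pairs having four common neighbours and $16n$ pairs having two.
\end{itemize}
A global invariant is needed. The paper uses the fact that $T^\ast_{2n}$ contains a spanning $T_{2n}$ while $T_n\square K_2$ does not. Concretely, the complete joins $K_{2,2}$ between the pairs with four common neighbours form one cycle of length $2n$ in $T^\ast_{2n}$, but two $n$-cycles in $T_n\square K_2$. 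In your own terms: delete the edges lying in exactly four $4$-cycles. Then $T_n\square K_2$ falls apart into two copies of $T_n$, whereas $T^\ast_{2n}$ leaves the connected graph $T_{2n}$.

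You also skip the pair $T_6^+$ versus $T_3\square K_2$. Both have $12$ vertices, and both contain triangles, since $T_3\cong K_{2,2,2}$. Your remark about $n=3$ compares $T_3^+$ with $T_3\square K_2$, which already differ in order. So your claim that triangles distinguish $T_n^+$ from the other families is unproved exactly at $n=6$. The paper separates these two graphs by vertex neighbourhoods: in $T_6^+$ every neighbourhood induces two triangles sharing a vertex, while in $T_3\square K_2$ it induces $C_4\sqcup K_1$.

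Finally, your relabelling for $T_4^+\simeq T_4^\ast$ is wrong. The swaps $(0,j)\leftrightarrow(1,j)$ keep every vertex in its column, so they send $(0,0)(1,2)$ to $(0,0)(0,2)$, never to an edge inside a column. You must move vertices between columns of the same parity; recall $T_4\cong K_{4,4}$ with parts the even and odd columns. For instance, exchanging $(0,0)\leftrightarrow(0,2)$ and $(1,1)\leftrightarrow(1,3)$ maps $T_4^\ast$ onto $T_4^+$, signs included.
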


\begin{proof}
First note that $T^{\ast}_{2n}$ has a signed subgraph $T_{2n}$ while $T_{n}\square K_2$ has no such signed subgraph, so these two families are disjoint. Next note that $T^+_{n}$ has triangles, but $T_{n}\square K_2$ has no triangle except for $n=3$, and in $T^+_6$ each vertex neighborhood induces two triangles sharing a single vertex while in $T_3\square K_2$ each vertex neighborhood induces $C_4\square K_1$. Finally, $T^{\ast}_{2n}$  has
triangles if and only if $n=2$. Here one can check that in fact $T^+_{4}$ is switching equivalent to $T^{\ast}_{4}$.
\end{proof}

Let $D_G$ denote the graph on $V(G)$ with $u$ and $v$ adjacent if and only if they have exactly four common neighbors in $G$. When $\Gamma=(G,\sigma)$ is a $5$-regular orthogonal signed graph, certain properties of $D_G$ are immediate. For instance, it is a disjoint union of complete graphs. It is convenient to organize our classification based on the structure of $D_G$.

\begin{theorem} \label{Thm:Main_thm_technical}
    Let $\Gamma=(G,\sigma)$ be a $5$-regular orthogonal signed graph. We have the following classification up to switching equivalence.

    \begin{enumerate}
        \item $D_G$ is edgeless  and $\Gamma$ is one of four signed graphs.

        \begin{enumerate}
        
\item[$(1.1)$] $\overline{Q_4^+}$ (signed folded hypercube, see  Theorem  \ref{folded}),
\item[$(1.2)$] Bipartite signed graph of $W(12,5)$ (see Definition \ref{defBiW}),
\item[$(1.3)$] $F\square K_2$ (see Theorem \ref{4reg} (2)),
\item[$(1.4)$] $\overline{Q_5}$ (signed hypercube).
        
    \end{enumerate}
    \item $D_G$ contains a triangle and $\Gamma$ is one of four signed graphs. 

    \begin{enumerate}
        \item[$(2.1)$] $T_3^+$,
        \item[$(2.2)$] $T_6^{\ast}$,
        \item[$(2.3)$] $T_4\square K_2$,
        \item[$(2.4)$] $T^+_4$.

    \end{enumerate}
        
    \item $D_G$ is a (nonempty) matching and $\Gamma$ belongs to one of three infinite families.
        \begin{enumerate}
            \item[$(3.1)$] $T_n\square K_2$, $n\geqslant3,n\neq 4$,
            \item[$(3.2)$] $T_n^+$, $n\geqslant 5$,
            \item[$(3.3)$] $T_{2n}^{\ast}$, $n\geqslant 4$.
        \end{enumerate}

        \end{enumerate}
\end{theorem}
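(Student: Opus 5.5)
The proof is a local structural analysis driven by Lemma \ref{Lem:evencommon}: any two distinct vertices have $0$, $2$ or $4$ common neighbours. A common-neighbour count of $6$ is impossible, since the degree is $5$.

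We first record the basic properties of $D_G$. Suppose $u,v$ have four common neighbours $N_u\cap N_v=S$. Then $N_u=S\cup\{v'\}$ and $N_v=S\cup\{u'\}$, with $v'$ possibly equal to $v$ when $u\sim v$. If $uv$ and $uw$ are both edges of $D_G$, then $v$ and $w$ share at least three of the four vertices of $S$ within $N_u$. By evenness they share four, so $vw\in E(D_G)$. Hence $D_G$ is a disjoint union of cliques.

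A clique in $D_G$ has at most about $3$ or $4$ vertices. This follows from counting inside $N(S)$ and from the sign condition $\lambda=0$: with the signature normalized at $u$, the four $2$-walks through $S$ must cancel. The cases of the theorem are then $D_G$ edgeless, $D_G$ containing a triangle, and $D_G$ a nonempty matching.

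\textbf{Triangle case.} Let $u,v,w$ be pairwise ``$4$-twins''. Their neighbourhoods overlap so heavily that a bounded local configuration forces a small graph. We fix a normalization, propagate the $\lambda=0$ equations outward, and show that the graph closes up within a bounded radius. The resulting finite search yields $T_3^+$, $T_6^\ast$, $T_4\square K_2$ or $T_4^+$. This is a finite, hand-checkable case analysis.

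\textbf{Matching case.} Here we aim to locate a $T_n$-like ladder. Each $D_G$-pair $\{u,v\}$ with common set $S$ behaves like a column $\{(0,j),(1,j)\}$ of $T_n$. We show that $S$ splits into two further $D_G$-pairs, namely the previous and next columns. Iterating around the cycle produces a spanning $4$-regular subgraph isomorphic to $T_n$, and Remark \ref{remT2n} lets us fix its signature as $T_n$. The remaining edges form a perfect matching $M$. Orthogonality with the $T_n$ signature constrains $M$ to three possibilities:
\begin{itemize}
\item $M$ joins the two vertices within each column, giving $T_n^+$;
\item $M$ joins columns $j$ and $j+n/2$ crosswise, giving $T^\ast_{2n}$;
\item $M$ pairs columns so that the graph contains a second copy of $T_{n/2}$-type structure, giving $T_n\square K_2$.
\end{itemize}
In the last possibility Lemma \ref{Lem:k-1_reg_ind} applies once a $4$-regular orthogonal induced subgraph $\Delta=T_n$ is found. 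I would actually organize this as follows: either the ladder structure built from $D_G$-pairs spans the whole graph, or it spans half of it, in which case $\Delta$ exists and Lemma \ref{Lem:k-1_reg_ind} finishes. The small exceptions $n\le 4$ migrate to the triangle case, using Lemma \ref{remcases}.

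\textbf{Edgeless case.} This is the main obstacle. Every pair of vertices has $0$ or $2$ common neighbours, so $G$ is a $5$-regular $(0,2)$-graph (rectagraph-like). Such graphs have at most $2^5=32$ vertices, with equality exactly for $Q_5$. I would invoke or reprove the known classification of $5$-regular $(0,2)$-graphs, which are $Q_5$, the folded $5$-cube $Q_4^+$ on $16$ vertices, the incidence double of the icosahedron, and a few others. For each candidate, we then check which admit orthogonal signings.

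Alternatively, if some vertex has a neighbourhood containing a $4$-regular orthogonal subgraph, Lemma \ref{Lem:k-1_reg_ind} together with Theorem \ref{4reg} gives $\overline{Q_5}=\overline{Q_4}\square K_2$ and $F\square K_2$. The leftover $(0,2)$-graphs with no such substructure must then be shown to be only $Q_4^+$ and the $W(12,5)$ graph. Controlling that leftover enumeration, which needs bounds on $|V|$ via counting $2$-paths, is where I expect most of the work.
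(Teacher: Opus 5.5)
Your architecture is the paper's: the trichotomy on $D_G$ (your union-of-cliques argument is fine), the known list of $5$-regular $(0,2)$-graphs plus a signing check in the edgeless case (the paper does no more than this), and in the matching case a $T_n$-ladder grown from $D_G$-pairs that is either spanning or, via Lemma~\ref{Lem:k-1_reg_ind}, half of $T_n\square K_2$.

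\textbf{The triangle case is not argued.} ``Propagate the $\lambda=0$ equations outward and show the graph closes up within a bounded radius'' gives no reason why propagation terminates. The same propagation in the matching case produces infinite families, so finiteness is exactly what has to be proved. Two ingredients are missing.
\begin{enumerate}[label=(\roman*)]
\item The local classification of Lemma~\ref{Lem:k=5_Nbhd}. With $\sigma$ normalized at $u$, each $v\in N_u$ has equally many positive and negative neighbours in $G_u$. Hence $G_u$ is $K_5$, $C_3\vee C_3$, $C_4\sqcup K_1$, or edgeless. You never derive this, and it drives the whole case.
\item For pairwise $4$-twins $a,b,c$, inclusion--exclusion inside $N_a$ gives $|N_a\cap N_b\cap N_c|\ge 4+4-5=3$.
\end{enumerate}
The paper then splits on $G[\{a,b,c\}]$:
\begin{enumerate}[label=(\alph*)]
\item A path $a\sim b\sim c$ puts $K_{2,3}$ into $G_b$. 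This forces $G_b=K_5$ and $G=K_6$, i.e.\ $T_3^+$.
\item A single edge $a\sim b$ puts $K_{1,4}$ into $G_a$. This forces $C_3\vee C_3$ and then $K_{4,4}+M$, i.e.\ $T_4^+$.
\item If $a,b,c$ are independent with four common neighbours, you get an orthogonally signed $K_{4,4}$, and Lemma~\ref{Lem:k-1_reg_ind} gives $T_4\square K_2$. This subcase closes via that global lemma, not by local propagation.
\item If there are exactly three common neighbours, the three pairwise-private common neighbours together with the shared ones form an independent set. The resulting $3\times 6$ sign pattern completes uniquely to a weight-$5$ weighing matrix, the bipartite double of $K_6$, i.e.\ $T_6^\ast$.
\end{enumerate}

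\textbf{In the matching case your steps are right but only asserted.}
\begin{enumerate}[label=(\alph*)]
\item \emph{Splitting $S$.} That the common set $S$ of a $D_G$-pair $\{u,v\}$ splits into two $D_G$-pairs needs a sign count. Normalized at $u$, $v$ has two positive and two negative edges into $S$, so exactly two of the six $4$-cycles through $u,v$ are positive. The two vertices of $S$ on a positive one have two equal-sign $2$-walks via $u,v$, so they need two further common neighbours. Closure of the ladder then uses that $D_G$ is a matching.
\item \emph{Whole-or-half dichotomy.} This needs a parity argument. Each vertex of the ladder's vertex set $X$ has at most one neighbour outside $X$, and pairs in $T_n$ have an even number of common neighbours. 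So a common neighbour outside $X$ would make $|N_u\cap N_v|$ odd. Hence $\Gamma[X]$ is orthogonal, and therefore $4$- or $5$-regular.
\item \emph{Analysis of $M$.} This is the real content of the spanning branch. If every column carries a matching edge, $\lambda(u_j,u_{j+1})=0$ forces all these within-column edges to share one sign, giving $T_n^+$. Otherwise a cross-column edge of minimal offset $d$ propagates to a pair of matching edges between columns $j$ and $j+d$ for every $j$, and $5$-regularity forces $d=n/2$. Further $\lambda$-computations propagate a common sign along these edges and rule out the straight pairing at the wrap-around. Switching plus relabelling then gives $T_{2n}^\ast$.
\end{enumerate}
Your third bullet cannot occur. $T_m\square K_2$ has no spanning $T_{2m}$ signed subgraph, so that family arises only in the non-spanning branch, as your reorganization correctly says.

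\textbf{Two smaller slips.}
\begin{enumerate}[label=(\alph*)]
\item Cliques of $D_G$ are not bounded by $3$ or $4$: for $G=K_6$ one has $D_G=K_6$, and $T_6^\ast$ gives $2K_6$. You never use this bound, so it does no harm.
\item The small members that belong to Case 2 are $T_3^+$, $T_4^+\simeq T_4^\ast$, $T_4\square K_2$ and $T_6^\ast$. They go there because one checks directly that their $D_G$ contains a triangle, not via Lemma~\ref{remcases}.
\end{enumerate}
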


In the remainder of this section we prove Theorem \ref{Thm:Main_thm_technical} through a series of lemmas. First we describe the local structure of a $5$-regular orthogonal signed graph. Below we will use $G_u=G[N_u]$ to denote the subgraph of $G$ induced on the neighborhood of $u$.

\begin{lemma} \label{Lem:k=5_Nbhd}
    If $\Gamma$ is a $5$-regular orthogonal signed graph and $u,v$ are distinct vertices, then $u$ and $v$ have either $0$, $2$, or $4$ common neighbors. Moreover, the subgraph $\Gamma_u=\Gamma[N_u]$ induced by the neighborhood of $u$ is switching equivalent to one of the four signed graphs in Figure \ref{Fig:5nbhds}.
\end{lemma}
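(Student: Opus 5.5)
The first assertion follows directly from Lemma~\ref{Lem:evencommon}. Two distinct vertices $u,v$ of a $5$-regular graph have at most $5$ common neighbours, and by Lemma~\ref{Lem:evencommon} that number is even. So it is $0$, $2$ or $4$. The value $5$ is the only one needing care, and it is excluded simply because it is odd.

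For the neighbourhood structure I would first determine the underlying graph $G_u$. For $v\in N_u$, the common neighbours of $u$ and $v$ are exactly the neighbours of $v$ inside $N_u$. Hence $\deg_{G_u}(v)$ is the number of common neighbours of $u$ and $v$, which by the first part lies in $\{0,2,4\}$. So $G_u$ is a graph on $5$ vertices in which every vertex has even degree. Such graphs are easy to list up to isomorphism:
\begin{itemize}
\item $\overline{K_5}$;
\item $C_3\cup 2K_1$;
\item $C_4\cup K_1$;
\item $C_5$;
\item the bowtie (two triangles sharing a vertex);
\item $K_2\vee 3K_1$;
\item $K_5$.
\end{itemize}
This list is closed under complementation inside $K_5$ (with $C_5$ self-complementary), which is a useful check that it is complete. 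The next step is to eliminate the candidates that cannot occur and to determine the signature on the survivors.

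For the signs, I would normalize $\Gamma$ at $u$, so that every edge $uv$ is positive. Take two distinct $v,w\in N_u$. The path $v\,u\,w$ contributes $+1$ to $\lambda(v,w)$, so the remaining common neighbours of $v$ and $w$ must contribute $-1$ in total. When $v\sim w$ inside $G_u$, a common neighbour $x\in N_u$ contributes $\sigma(vx)\sigma(wx)$. Together with the parity information this forces relations among the signs of triangles inside $N_u$. It also bounds how many common neighbours of $v$ and $w$ can lie outside $N_u\cup\{u\}$.

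I would then treat each candidate in turn:
\begin{itemize}
\item For $\overline{K_5}$ and $C_4\cup K_1$ the signature is essentially forced up to switching, which produces the hypercube-like and $T_n$-like local pictures.
\item For $K_5$, the graph $\Gamma[\{u\}\cup N_u]$ is a signed $K_6$. Orthogonality there forces it to be $T_3^+$, which fixes the signature.
\item For the remaining candidates ($C_3\cup 2K_1$, $C_5$, the bowtie, $K_2\vee 3K_1$), I would count for each pair $v,w\in N_u$ how many additional common neighbours are required. I would check this against the degree budget: each vertex of $N_u$ has exactly $4-\deg_{G_u}(v)$ neighbours outside $\{u\}\cup N_u$. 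I would look for a contradiction either in that count or in the sign product around a triangle.
\end{itemize}

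The step I expect to be the main obstacle is exactly this elimination. The degree counts alone do not rule out several of these graphs, for instance $K_2\vee 3K_1$ and $C_3\cup 2K_1$. Excluding them requires combining sign constraints from several pairs simultaneously, possibly looking one step beyond $N_u$. Deciding which three of the seven candidates die (so that four survive, as in Figure~\ref{Fig:5nbhds}) is where the real case analysis lies. I would start with the configurations having a vertex of $G_u$-degree $4$ together with nonadjacent pairs, since those impose the most rigid constraints on outside common neighbours.
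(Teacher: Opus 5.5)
\textbf{There is a genuine gap.} Your first assertion and your list of the seven even-degree graphs on five vertices are correct. The second assertion, however, is not proved. You defer the elimination of three candidates, and the determination of the signatures on the survivors, as ``the main obstacle.''

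The tool you propose for that step cannot settle it inside $\Gamma_u$. Orthogonality of a pair $v,w\in N_u$ involves common neighbours that may lie outside $\{u\}\cup N_u$, so it places no constraint on $\Gamma_u$ by itself.

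The missing idea is to apply orthogonality to the pair $u,v$ itself, for $v\in N_u$. After normalizing at $u$,
\[
0=\lambda(u,v)=\sum_{w\in N_u\cap N_v}\sigma(uw)\sigma(vw)=\sum_{w\in N_u\cap N_v}\sigma(vw).
\]
So at every vertex of $\Gamma_u$, exactly half of the incident edges of $\Gamma_u$ are positive. This is the paper's argument, and it decides everything locally.

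\begin{itemize}
\item \emph{$C_3\cup 2K_1$ and $C_5$ die.} Every cycle vertex has degree $2$, so signs must alternate around an odd cycle, which is impossible.
\item \emph{$K_2\vee 3K_1$ dies.} Let $a,b$ be the adjacent vertices and $t\in\{x,y,z\}$ the others. Balance at $t$ gives $\sigma(bt)=-\sigma(at)$. Balance at $a$ gives $\sigma(ab)=-\sum_t\sigma(at)$. Balance at $b$ gives $\sigma(ab)=-\sum_t\sigma(bt)=\sum_t\sigma(at)$. Hence $\sigma(ab)=0$, a contradiction.
\item \emph{The survivors are pinned down.} On $C_4\cup K_1$ the signs alternate around the $4$-cycle. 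The bowtie gets one positive and one negative triangle. On $K_5$ the positive edges form a $5$-cycle. Together with $\overline{K_5}$, these are exactly the four signed graphs of Figure~\ref{Fig:5nbhds}.
\end{itemize}

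So you never need to look beyond $N_u$. For $K_5$ you also need not appeal to uniqueness of the orthogonal signed $K_6$.
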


\begin{figure}
    \centering
    \begin{tikzpicture}[x=1.1cm,y=1.1cm]

\begin{scope}[shift={(0,0)}]
  \coordinate (A) at (0,2);
  \coordinate (B) at (2,2);
  \coordinate (C) at (2,0);
  \coordinate (D) at (0,0);
  \coordinate (E) at (1,1);

  \draw[en] (D) -- (A) -- (B) -- (C);
  \draw[en] (D) -- (E) -- (C);

  \draw[ep] (D) -- (C);
  \draw[ep] (A) -- (E) -- (B);
  \draw[ep] (A) .. controls (0.3,0.9) and (1.1,0.2) .. (C);
  \draw[ep] (B) .. controls (1.7,0.9) and (0.9,0.2) .. (D);

  \foreach \P in {A,B,C,D,E}{\node[vtx] at (\P) {};}
\end{scope}

\begin{scope}[shift={(4,0)}]
  \coordinate (A) at (0,2);
  \coordinate (B) at (2,2);
  \coordinate (C) at (2,0);
  \coordinate (D) at (0,0);
  \coordinate (E) at (1,1);

  \draw[en] (A) -- (D);
  \draw[ep] (B) -- (C);

  \draw[ep] (A) -- (E) -- (D);
  \draw[en] (E) -- (B);
  \draw[en] (E) -- (C);

  \foreach \P in {A,B,C,D,E}{\node[vtx] at (\P) {};}
\end{scope}

\begin{scope}[shift={(8,0)}]
  \coordinate (A) at (0,2);
  \coordinate (B) at (2,2);
  \coordinate (C) at (2,0);
  \coordinate (D) at (0,0);
  \coordinate (E) at (1,1);

  \draw[en] (A) -- (B);
  \draw[en] (D) -- (C);
  \draw[ep] (A) -- (D);
  \draw[ep] (B) -- (C);

  \foreach \P in {A,B,C,D,E}{\node[vtx] at (\P) {};}
\end{scope}

\begin{scope}[shift={(12,0)}]
  \coordinate (A) at (0,2);
  \coordinate (B) at (2,2);
  \coordinate (C) at (2,0);
  \coordinate (D) at (0,0);
  \coordinate (E) at (1,1);
  \foreach \P in {A,B,C,D,E}{\node[vtx] at (\P) {};}
\end{scope}

\end{tikzpicture}

    \caption{The four possible neighborhoods of a vertex in a 5-regular orthogonal signed graph, up to switching equivalence.}
    \label{Fig:5nbhds}
\end{figure}

\begin{proof}
The vertices $u$ and $v$ must have $0$, $2$, or $4$ common neighbors by Lemma \ref{Lem:evencommon}. Now for any vertex $u$ and any $v\in N_u$ there are either $0$, $2$, or $4$ other vertices of $N_u$ incident with $v$. So $G_u=G[N_u]$ is an Eulerian graph on $5$ vertices. Moreover, we may assume via switching that each edge incident with $u$ has positive sign. Thus exactly half the edges of $G_u$ incident with $v$ are positive and half are negative. This rules out several Eulerian graphs on $5$ vertices, e.g. the $5$-cycle. The four listed possibilities survive.
\end{proof}

Two of these four local subgraphs lead only to a single signed graph each.

\begin{lemma} \label{Lem:Locally_k5_c4}
If $\Gamma=(G,\sigma)$ is a $5$-regular orthogonal signed graph containing a vertex $u$ such that $G_u$ is isomorphic to $K_5$ or $C_4\sqcup K_1$ respectively, then $\Gamma=T_3^+$ or $\Gamma=T_4\square K_2$ respectively. In each case there is a unique signature up to switching.
\end{lemma}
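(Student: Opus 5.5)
The two cases are handled separately, and in each the aim is to show that the closed neighbourhood of $u$ essentially forces the whole graph.

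\emph{Case $G_u\cong K_5$.} Then $u$ together with its five neighbours spans a $K_6$. Every vertex $v\in N_u$ already has its five neighbours inside $\{u\}\cup N_u$. The same holds for $u$. So $\{u\}\cup N_u$ is a union of connected components, and by connectivity $G=K_6$.

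It remains to show the orthogonal signing of $K_6$ is unique up to switching. Normalize at $u$. By the proof of Lemma \ref{Lem:k=5_Nbhd}, each vertex of $\Gamma_u$ then has two positive and two negative edges inside $\Gamma_u$. Hence the positive edges form a $2$-regular spanning subgraph of $K_5$, and so do the negative ones. The only $2$-regular graph on $5$ vertices is $C_5$, so this is the decomposition of $K_5$ into two complementary $5$-cycles. That decomposition is unique up to isomorphism, which matches the first picture in Figure \ref{Fig:5nbhds}. One then checks that this signature is orthogonal and that $T_3^+$ (a signed $K_6$) realises it. The conclusion is $\Gamma=T_3^+$.

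\emph{Case $G_u\cong C_4\sqcup K_1$.} Let the $4$-cycle in $N_u$ be $a_1a_2a_3a_4$ and let $w$ be the isolated neighbour. The first goal is to locate a copy of $T_4$ and then apply Lemma \ref{Lem:k-1_reg_ind}.

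\begin{itemize}
\item $u$ and $a_1$ have exactly the two common neighbours $a_2,a_4$. Every vertex of the $4$-cycle has two neighbours in $N_u$ besides $u$, so each $a_i$ has exactly two further neighbours outside $\{u\}\cup N_u$.
\item $u$ and $a_1$ are adjacent with common neighbours $a_2,a_4$, and $a_1,a_3$ have common neighbours $u,a_2,a_4$ already. By Lemma \ref{Lem:evencommon} the number of common neighbours of $a_1,a_3$ must be even, so they have exactly one more, giving $4$ in total; that is, $a_1a_3\in E(D_G)$. The same argument gives $a_2a_4\in E(D_G)$.
\item The key claim is that $u$ and $w$ generate the same structure. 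The plan is to show that $\{u\}\cup\{a_i\}\cup\{\text{the extra common neighbours of } a_1,a_3 \text{ and of } a_2,a_4\}$ induces a $4$-regular graph isomorphic to $T_4$. Concretely, I expect $T_4$ to be $K_{4,4}$ minus a perfect matching, which is the cube; since $T_4$ has $8$ vertices and degree $4$, this should be checked against Definition \ref{defT2n}. The approach is to use the parity condition repeatedly on pairs such as $(a_i,\text{outside neighbours})$ to identify the vertices and force adjacencies. At each step, every vertex pair sharing one known common neighbour needs another one, and the degree bound of $5$ limits the choices.
\end{itemize}

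Once a $4$-regular subgraph $\Delta$ on $8$ vertices has been found, its signature must be checked to be orthogonal. Since only one edge per vertex lies outside $\Delta$, orthogonality of $\Gamma$ forces this: for a pair $x,y\in V(\Delta)$, an edge leaving $\Delta$ at a common neighbour cannot occur, since common neighbours outside $\Delta$ would give an odd count, essentially the matching argument of Lemma \ref{Lem:k-1_reg_ind}. So $\Delta$ is an orthogonal $4$-regular signed graph, and by Theorem \ref{4reg} it is $T_4$; the alternatives $\overline{Q_4}$ and $F$ have the wrong order or neighbourhood structure. Since $\Gamma\neq\Delta$ (the vertex $w$ lies outside), Lemma \ref{Lem:k-1_reg_ind} gives $\Gamma\cong T_4\square K_2$. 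Uniqueness of the signature follows from uniqueness of $T_4$ in Theorem \ref{4reg} combined with the switching argument at the end of Lemma \ref{Lem:k-1_reg_ind}.

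\emph{Main obstacle.} The hard step is the local forcing argument producing the $8$-vertex $4$-regular subgraph, in particular ruling out that outside neighbours of different $a_i$ coincide in unexpected ways. I would first enumerate the outside neighbours $b_1,\dots,b_8$, two for each $a_i$, and use evenness of common neighbours for the pairs $(a_i,a_j)$ and $(u,b)$ to pin down the identifications.
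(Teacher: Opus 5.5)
Your $K_5$ case is correct and is the paper's argument. The $C_4\sqcup K_1$ case has a genuine gap, and its target is wrong.

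\textbf{The target.} $T_4$ is not the cube: the cube is $K_{4,4}$ minus a perfect matching, which is $3$-regular. In $T_4$ the columns $j$ and $j+2$ have identical neighbourhoods, so $T_4\cong K_{4,4}$. Then $T_4\square K_2\cong K_{4,4}\square K_2$ has edgeless vertex neighbourhoods, so it never satisfies the hypothesis. The ``$T_4$'' in the statement is a misprint for $T_3$. The paper's proof actually concludes $G\cong K_{2,2,2}\square K_2$, and $T_3\cong K_{2,2,2}$, since $(i,j)$ is adjacent to everything except $(1-i,j)$ (compare the proof of Lemma~\ref{remcases}).

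So no $8$-vertex $T_4$ can be found. In $K_{2,2,2}\square K_2$ only antipodal pairs of an octahedron have four common neighbours, so there is no $K_{4,4}$ subgraph at all. Your candidate set $\{u\}\cup\{a_i\}\cup\{\text{extra common neighbours}\}$ also has at most $7$ vertices. The correct $\Delta$ is the octahedron on $\{u,x,a_1,a_2,a_3,a_4\}$, where $x$ is \emph{simultaneously} the fourth common neighbour of $a_1,a_3$ and of $a_2,a_4$.

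\textbf{The missing step.} Proving that coincidence is the missing idea. Your proposal offers only ``parity, repeatedly'', but the paper's argument uses the signs.
\begin{enumerate}
\item Normalize at $u$. Then $\lambda(u,a_i)=0$ forces the signs on the $4$-cycle to alternate. So the three $2$-walks from $a_1$ to $a_3$ inside $\{u\}\cup N_u$ sum to $-1$, and the fourth common neighbour $x$ must satisfy $\sigma(a_1x)=\sigma(a_3x)$. Likewise define $y$ for $a_2,a_4$.
\item Suppose $x\neq y$. The two $2$-walks from $u$ to $x$ through $a_1,a_3$ have equal sign, so $u,x$ need two more common neighbours in $\{a_2,a_4,w\}$; say $x\sim a_4$.
\item In $G_{a_1}$ the vertex $u$ has exactly the two non-adjacent neighbours $a_2,a_4$. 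By Lemma~\ref{Lem:k=5_Nbhd} this forces $G_{a_1}\cong C_4\sqcup K_1$. Since $x$ is a neighbour of $a_4$ inside $G_{a_1}$, it is the fourth vertex of that $4$-cycle, so $x\sim a_2$.
\item Symmetrically $y\sim a_1,a_3$. Then $N_{a_1}\cap N_{a_3}=\{u,a_2,a_4,x,y\}$ has odd size, a contradiction (the paper instead computes $\lambda(x,y)=\pm4$).
\end{enumerate}
Hence $x=y$, and $\{u,x,a_1,\dots,a_4\}$ induces $K_{2,2,2}$.

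\textbf{The rest.} From here your framework works. $\Delta$ is orthogonal because each of its vertices has only one edge leaving it. Theorem~\ref{4reg} identifies $\Delta$ as $T_3$. Lemma~\ref{Lem:k-1_reg_ind}, applied with $w\notin V(\Delta)$, then gives $\Gamma\simeq T_3\square K_2$ with a unique signature.
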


\begin{proof}
    If $G_u\cong K_5$ then $G\cong K_6$. Lemma \ref{Lem:k=5_Nbhd} implies there is only one signature in this case, namely $T_3^+$.

    Now suppose $G_u\cong C_4\sqcup K_1$. We label the $C_4$ cyclically with $a,b,c,d$ and we let the fifth vertex be $u'$. We may assume $\sigma$ is normalized on $u$ and $\sigma(ab)=\sigma(cd)=-1$ and $\sigma(bc)=\sigma(ad)=1$.

    In the subgraph $G[V(G_u)\cup \{u\}]$ the vertices $a,c$ are joined by two negative paths of length 2 and one positive path, so there must be some distinct vertex $x$ incident to both $a$ and $c$ such that $\sigma(ax)=\sigma(cx)$. Similarly there is a vertex $y$ incident with both $b$ and $d$ such that $\sigma(by)=\sigma(dy)$. 

    If $x=y$ then $G$ contains an induced $K_{2,2,2}$ and Lemma \ref{Lem:k-1_reg_ind} implies $G$ is $K_{2,2,2}\square K_2$ with a unique signature up to switching, namely $T_4\square K_2$. 

    If $x\neq y$ then $u,x$ currently have two common neighbors but both the paths from $u$ to $x$ through those neighbors have the same sign, so $u$ and $x$ must have two additional common neighbors from among $\{b,d,u'\}.$ By symmetry we may assume $x\sim d$. Now $G_a$ contains the path $(x,d,u,b)$ hence $G_a$ must be $C_4\sqcup K_1$ and $x\sim b$. 

    By a similar argument we conclude $y\sim a$ and $y\sim c$. So $u,x,y$ each have $a,b,c,d$ as neighbors. Since $\sigma(ax)=\sigma(cx)$ and $u$ is normalized, we must have $\sigma(bx)=\sigma(dx)=-\sigma(ax)$ and similarly for the edges incident with $y$. But then $\lambda(x,y)=\pm 4$, a contradiction. 
\end{proof}

Now we will study the cases in Theorem \ref{Thm:Main_thm_technical}. Note that the underlying graphs in Case 1 are  so-called  $(0,2)$-graph. These are connected graphs in which any two vertices have either $0$ or $2$ common neighbors.
It is known that
 $(0,2)$-graphs are regular, say of valency $k$, and have at most $2^k$ vertices \cite{Mu}. So there are only a modest number of candidates to check for an orthogonal signature \cite{Br,BrCh}. 
 Out of eight candidate $5$-regular graphs, exactly the listed $4$ admit orthogonal signatures. 
 
\begin{lemma} \label{Lem:MT_02_graph}
    If we are in Case 1 of Theorem \ref{Thm:Main_thm_technical} then $\Gamma$ is one of the four listed signed graphs. 
\end{lemma}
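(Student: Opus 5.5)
In Case 1, $D_G$ is edgeless, so no two vertices have four common neighbours. By Lemma \ref{Lem:k=5_Nbhd}, any two distinct vertices of $G$ then have $0$ or $2$ common neighbours. Hence $G$ is a $5$-regular $(0,2)$-graph. (It is connected because $\Gamma$ is orthogonal.)

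The plan is to appeal to the known classification of such graphs. By Mulder's bound \cite{Mu}, $|V(G)|\leqslant 2^5=32$. The enumeration of $(0,2)$-graphs of small valency \cite{Br,BrCh} gives exactly eight $5$-regular candidates. Among them are $K_6$ minus a perfect matching (that is, $K_{2,2,2,\dots}$-type examples), the icosahedron's bipartite double, $Q_4^+$, the co-Heawood graph $\square K_2$, and $Q_5$. For each candidate we decide whether it admits a signature with $A^2=5I$. It is also worth checking that no neighbourhood is $K_5$ or $C_4\sqcup K_1$. This holds automatically, since either configuration forces a pair with four common neighbours, which is excluded here.

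For the four positive cases, existence is immediate from earlier results:
\begin{itemize}
\item $\overline{Q_5}$ comes from Corollary \ref{cartesian} applied recursively.
\item $F\square K_2$ comes from Corollary \ref{cartesian} applied to $F$ of Theorem \ref{4reg}.
\item $\overline{Q_4^+}$ comes from Theorem \ref{folded}, since $4\equiv 0\pmod 4$.
\item The bipartite graph of $W(12,5)$ comes from Definition \ref{defBiW}.
\end{itemize}
Uniqueness up to switching is needed in each case. For $Q_5$ and $F\square K_2$, Lemma \ref{Lem:k-1_reg_ind} reduces it to uniqueness of the induced $4$-regular orthogonal subgraph, namely $\overline{Q_4}$ or $F$, which Theorem \ref{4reg} provides. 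For this I must exhibit an induced copy of $Q_4$ or of the co-Heawood graph carrying an orthogonal restriction. That needs a little care, since a sub-signing need not itself be orthogonal. It is cleaner to normalise at a vertex and propagate: in a $(0,2)$-graph every $2$-path lies in a unique $4$-cycle, and orthogonality forces every $4$-cycle to be negative (product of signs $-1$). So the signature is determined, up to switching, by which cycles are negative. The argument is then a spanning tree plus $4$-cycle propagation. Since the $4$-cycles generate the cycle space in each of these graphs, the signature is unique up to switching. I would verify this generation claim for all four graphs, for example by computing the rank of their $4$-cycle spaces.

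For the four remaining candidates, non-existence follows from the same observation. Every $4$-cycle must be negative, so a $\mathbb{Z}_2$-linear system $\sum_{e\in C}x_e=1$ is imposed over all $4$-cycles $C$. I would show it is inconsistent by finding a set of $4$-cycles whose symmetric difference is empty but whose number is odd. Alternatively, a cycle-space argument works: a cycle expressible both as a sum of an odd and of an even number of $4$-cycles yields a contradiction. For example, $K_6$ minus a perfect matching is $K_{2,2,2}$'s analogue and has many $4$-cycles; presumably a small odd dependency among them is easy to find. This case-by-case check, which in practice is a finite (possibly computer-assisted) linear algebra computation, is the main obstacle. It requires knowing the eight candidates explicitly and handling each. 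I expect the lemma to be proved essentially by citation of \cite{Br,BrCh} plus this finite verification.
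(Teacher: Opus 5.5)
Your reduction is the same as the paper's. In Case~1, $G$ is a $5$-regular $(0,2)$-graph. Mulder's bound and Brouwer's enumeration then leave eight candidates, and a finite check shows that exactly four of them carry orthogonal signatures; the paper says no more than this. Your reformulation is a sound way to organise that check: in a $(0,2)$-graph, $\Gamma$ is orthogonal iff every $4$-cycle is negative, so existence is solvability of a $\mathbb{Z}_2$-system, and uniqueness up to pure switching means the $4$-cycles span the cycle space.

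\textbf{The gap.} Your uniqueness step is false for $Q_4^+$. That graph is not bipartite (it has $5$-cycles), while every $\mathbb{F}_2$-sum of $4$-cycles has an even number of edges, so the $4$-cycles cannot span its cycle space. Concretely, if $\sigma$ is an orthogonal signature then so is $-\sigma$. The two agree on every $4$-cycle and disagree on every $5$-cycle, so they are not related by switching alone. Your rank computation would return codimension $1$, not $0$. The parity obstruction does not arise for $Q_5$, $F\square K_2$ and the double of the icosahedron, which are bipartite, but there you still have to check the spanning claim.

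\textbf{The missing idea.} You need an automorphism of $Q_4^+$ that interchanges $\sigma$ and $-\sigma$; this is allowed because the paper's switching equivalence includes permutations.
\begin{itemize}
\item View $Q_4^+$ as $\mathbb{F}_2^5/\langle 11111\rangle$, with the five unit vectors as edge directions. Every $5$-cycle uses each direction exactly once, since five unit vectors can only sum to $11111$ if they are distinct.
\item Swapping two consecutive directions in a $5$-cycle changes it by a $4$-cycle, hence flips its sign. Rotating the starting point is an even permutation. So the sign of a $5$-cycle is $c\,\mathrm{sgn}(\pi)$ for a global constant $c$, where $\pi$ is the order in which the directions are traversed.
\item Let $\phi$ be a transposition of two coordinates. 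It is an automorphism, so $\sigma\circ\phi$ is negative on all $4$-cycles, and it has the opposite sign to $\sigma$ on every $5$-cycle. Hence $\sigma\circ\phi$ agrees with $-\sigma$ on all $4$- and $5$-cycles.
\item Even cycles of $Q_4^+$ lift to closed walks in $Q_5$, so they are sums of $4$-cycles. Therefore the $4$-cycles together with one $5$-cycle span the cycle space, and $\sigma\circ\phi$ is switching equivalent to $-\sigma$.
\end{itemize}

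\textbf{A smaller error.} Your sample candidate is wrong. $K_6$ minus a perfect matching is $K_{2,2,2}$, which is $4$-regular and not a $(0,2)$-graph, since opposite vertices have four common neighbours. A genuine candidate is $K_4\square C_4$: the three $4$-cycles of a $K_4$ fibre have empty sum, which gives exactly the odd dependency you are looking for. The remaining nonexistence cases are, as in the paper, a finite check over Brouwer's list.
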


\begin{lemma}\label{Lem:MT_D4triangle}
    If we are in Case 2 of Theorem \ref{Thm:Main_thm_technical} then $\Gamma$ is one of the four listed signed graphs.
\end{lemma}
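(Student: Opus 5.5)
We must show that if $D_G$ contains a triangle, then $\Gamma$ is one of $T_3^+$, $T_6^{\ast}$, $T_4\square K_2$, $T_4^+$. The plan is to fix a triangle $u,v,w$ in $D_G$, so that any two of $u,v,w$ share four common neighbours. Since each has degree $5$, the pairwise common neighbourhoods are large. Let $S=N_u\cap N_v$ with $|S|=4$.

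First we show that $u,v,w$ essentially share one $4$-set. $N_w$ meets $N_u$ in four vertices and $N_v$ in four vertices, all inside the $5$-sets $N_u$ and $N_v$. We split into the case $N_u\cap N_v\cap N_w=S$ and the case where the triple intersection has size $3$.

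In the second case, $N_u=S\cup\{a\}$, $N_v=S\cup\{b\}$ and $N_w$ contains $a$, $b$ and three vertices of $S$. We then inspect the local graphs $G_x$ for $x\in S$, using Lemma \ref{Lem:k=5_Nbhd}. Such an $x$ is adjacent to at least two of $u,v,w$, and these are pairwise nonadjacent or adjacent. Either configuration forces an odd common neighbourhood or one of the neighbourhood types $K_5$ or $C_4\sqcup K_1$. Lemma \ref{Lem:Locally_k5_c4} then gives $T_3^+$ or $T_4\square K_2$.

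Now assume a common $4$-set $S$ and write $N_u=S\cup\{u'\}$, $N_v=S\cup\{v'\}$, $N_w=S\cup\{w'\}$. Normalize at $u$. The condition $\lambda(u,v)=0$ then forces the signs from $v$ to $S$ to be two $+$ and two $-$ relative to $u$; the same holds for $w$ and for the pair $v,w$. Write $\epsilon_v,\epsilon_w,\epsilon_u=(1,1,1,1)$ for these sign vectors on $S$. They are pairwise orthogonal $\pm1$ vectors of length $4$, so at most four such vertices can exist, $D_G$-cliques have size at most $4$, and the sign pattern is a partial Hadamard matrix.

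Next we analyse adjacencies. Each $x\in S$ is adjacent to $u,v,w$, which uses three of its five edges. Every $x\in S$ has $G_x\supseteq\{u,v,w\}$ with the remaining two neighbours in $S\cup\{u',v',w'\}\cup$ outside. If $u\sim v$, then $G_u$ contains $v$ adjacent to all of $S$, which forces $G_u\cong K_5$ and hence $T_3^+$. So $u,v,w$ are independent.

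Consider the graph $G[S]$. Orthogonality of $x,y\in S$ requires balancing the three $2$-paths through $u,v,w$; their signs are determined by $\epsilon_v$ and $\epsilon_w$. This forces a specific structure: $S$ splits into two pairs $\{x_1,x_2\},\{x_3,x_4\}$ according to the Hadamard pattern. Pairs with an odd number of common neighbours so far need a further common neighbour, drawn from $S$, from $u',v',w'$, or from a fourth vertex $z$ of the $D_G$-class.

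We then branch. If a fourth vertex $z$ with $N_z\supseteq S$ exists, then $S\cup\{u,v,w,z\}$ induces $K_{4,4}$ with a Hadamard signature. The last edge at each vertex then gives a structure that should close up as $T_4^+$ or $T_4\square K_2$, since $K_{4,4}$ sits inside $T_4$-type graphs. Otherwise the remaining edges at $S$ and at $u',v',w'$ must be completed. Here the vertices $u',v',w'$ must pairwise share the needed common neighbours, producing a $6$-cycle-like wrap-around that yields $T_6^{\ast}$.

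The main obstacle is this last completion step, a finite but fiddly case analysis on how the fifth edges at $x\in S$ and the remaining edges at $u',v',w'$ attach. At each stage I would repeatedly use Lemma \ref{Lem:k=5_Nbhd} to restrict each new $G_x$ to the four allowed types. Whenever an induced $T_n$ or $K_{2,2,2}$ appears, I would invoke Lemma \ref{Lem:k-1_reg_ind} to shortcut the analysis. The graph should close up within about $12$ vertices, after which uniqueness of the signature is checked by normalizing along a spanning tree.
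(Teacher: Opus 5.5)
\textbf{There is a genuine gap.} Organising the cases by $|N_u\cap N_v\cap N_w|\in\{3,4\}$ is a viable plan. However, the conclusions you draw in both branches are wrong, and the four target graphs land in the wrong branches.

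\textbf{The branch with triple intersection of size $3$.} You assert that the local graphs $G_x$ force an odd common neighbourhood, or a $K_5$ or $C_4\sqcup K_1$ neighbourhood. This fails for $T_6^{\ast}$ itself. Its underlying graph is $K_{6,6}$ minus a perfect matching, so $D_G=2K_6$. Every $D_G$-triangle is independent with exactly three common neighbours, and every $G_x$ is edgeless. So this branch is precisely where $T_6^{\ast}$ arises. The only other possibility there is $K_6=T_3^+$, when $u,v,w$ are pairwise adjacent; one checks that no other adjacency pattern among $u,v,w$ can occur in this branch. Your sketch contains no argument that produces $T_6^{\ast}$. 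The paper handles this case by showing that the six vertices $x,y,z,\alpha,\beta,\gamma$ are independent, and then completing the $3\times 6$ sign pattern uniquely to a weight-$5$ weighing matrix of order $6$. Relatedly, your bound ``$D_G$-cliques have size at most $4$'' only holds for vertices whose neighbourhoods contain a common $4$-set; $K_6$ and $T_6^{\ast}$ have $D_G$-cliques of size $6$.

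\textbf{The common-$S$ branch.} The step ``$u\sim v$ forces $G_u\cong K_5$'' is false. A vertex of degree $4$ in $G_u$ can also be the centre of the bowtie $C_3\lor C_3$ from Lemma \ref{Lem:k=5_Nbhd}. In this branch $G_u\not\cong K_5$ in fact: otherwise $G=K_6$, and there is no room for $w$ with $S\subseteq N_w$. So an edge $u\sim v$ forces the bowtie, and this is exactly how $T_4^+$ arises ($K_{4,4}$ plus a perfect matching inside the parts).

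Every $D_G$-triangle of $T_4^+$ contains a matching edge. So once you declare $u,v,w$ independent, $T_4^+$ is excluded, and your later expectation that the $K_{4,4}$ configuration ``closes up as $T_4^+$'' cannot be realised. Likewise, your ``otherwise'' sub-branch cannot yield $T_6^{\ast}$, since $T_6^{\ast}$ has no $D_G$-triangle with a common $4$-set. That sub-branch is in fact empty. For independent $u,v,w$, the Hadamard pattern gives each pair in $S$ a $2$-walk sum of $\pm1$ through $u,v,w$. The paper uses this to show that the required extra common neighbour is adjacent to all of $S$. This yields an orthogonal $K_{4,4}$, and then $T_4\square K_2$ by Lemma \ref{Lem:k-1_reg_ind}.

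\textbf{The correct assignment.}
\begin{itemize}
\item Triple intersection of size $3$ gives $K_6$ (pairwise adjacent) or $T_6^{\ast}$ (independent).
\item A common $4$-set gives $T_4^+$ (exactly one edge, bowtie neighbourhood) or $T_4\square K_2$ (independent).
\end{itemize}
The paper reaches the same four outcomes by branching first on $G[\{a,b,c\}]$ (containing $P_3$, a single edge, or independent), and only then on the size of the triple intersection. Finally, your completion step (``should close up within about $12$ vertices'') is asserted rather than argued, so even with the branches corrected the endgame still needs an actual argument.
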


\begin{proof}
    
Let $a,b,c$ be three vertices such that each pair has four common neighbors in $G$. Note that \[5=|N_a|\geqslant |N_a\cap N_b|+|N_a\cap N_c|-|N_a\cap N_b\cap N_c|=8-|N_a\cap N_b\cap N_c|\]

So there are three vertices $x,y,z$ adjacent to each of $a,b,c$. We proceed with cases based on the adjacency (in $G$) of $a,b,c$.

Case 1: $G[\{a,b,c\}]$ contains $P_3$. Say $a\sim b\sim c$. The induced subgraph $G_b$ contains $K_{2,3}$ so Lemma \ref{Lem:k=5_Nbhd} implies $G_b=K_5$, hence $G=K_6$.

Case 2: $G[\{a,b,c\}]=K_2\sqcup K_1$. Say $a\sim b$. The fourth common neighbor of $a,b$ is some $w\notin \{c,x,y,z\}$ and $G_a$ contains $K_{1,4}$ on $\{b;w,x,y,z\}$ hence Lemma \ref{Lem:k=5_Nbhd} implies $G_a$ is $C_3\lor C_3$. Without loss we assume $w\sim x$ and $y\sim z$. Now $y$ and $x$ require one further common neighbor which must be some new vertex $d$. So, in the induced subgraph $G_y$ the vertex $z$ has degree at least 3 and Lemma \ref{Lem:k=5_Nbhd} implies $d\sim z$ and $d\sim c$. A final application of the lemma shows that $c\sim w$ and the graph is $K_{4,4}+M$. 

Case 3: $G[\{a,b,c\}]=3K_1$ and $|N_a\cap N_b\cap N_c|=4$. Say $N_a\cap N_b\cap N_c=\{w,x,y,z\}$. If $w\sim x$ then Lemma \ref{Lem:k=5_Nbhd} applied to $G_x$ would require an adjacency among $a,b,c$ and we are not in this case, hence $G$ contains an induced $K_{3,4}$. Up to switching equivalence the corresponding submatrix of $A(\Gamma)$ is \[\begin{array}{cccccc}
    1&1&1&1\\1&1&-&-\\ 1&-&1&-\\
    \end{array}\]

So the fourth common neighbor $d$ of $w,x$ must in fact have four common neighbors with each of $a,b,c$ and $G$ contains an orthogonal signed $K_{4,4}$ subgraph. Now Lemma \ref{Lem:k-1_reg_ind} implies $G=K_{4,4}\square K_2$.

Case 4: $G[\{a,b,c\}]=3K_1$ and $|N_a\cap N_b\cap N_c|=3$. Each of the three pairs among $a,b,c$ require an additional common neighbor $\alpha,\beta,\gamma$. We claim that $S=\{x,y,z,\alpha,\beta,\gamma\}$ induces an independent set in $G$. Indeed, if $e$ were an edge with ends in $S$ then Lemma \ref{Lem:k=5_Nbhd} applied to one of $G_a,G_b,G_c$ implies that $S$ contains a $C_{3}\lor C_3$ subgraph. But the degree of each vertex of $S$ is already at least $2$ without considering edges within $S$, so this is impossible. 

It follows that, up to switching, the submatrix of the signed adjacency matrix of $G$ with rows indexed by $\{a,b,c\}$ and columns indexed by $S$ is 
    \[\begin{array}{cccccc}
    1&1&1&1&1&0\\1&1&-&-&0&1\\ 1&-&1&0&-&1\\
    \end{array}\]

Similar to the previous argument, there is  a unique way, up to switching, to complete this matrix to a $6\times 6$ orthogonal submatrix (a weighing matrix of weight 5), here $G$ is the bipartite double cover of $K_6$.
\end{proof}


\begin{lemma}\label{Lem:Tn_subgraph}
    If we are in Case 3 of Theorem \ref{Thm:Main_thm_technical} then $\Gamma$ contains a signed subgraph switching equivalent to $T_n$ for some $n\geqslant 3$.
\end{lemma}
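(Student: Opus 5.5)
We work in Case 3: $D_G$ is a nonempty matching. Call a pair $\{u,u'\}$ with $u u'\in E(D_G)$ a \emph{twin pair}. Neither $G_u\cong K_5$ nor $G_u\cong C_4\sqcup K_1$ can occur except in the already-identified graphs of Lemma \ref{Lem:Locally_k5_c4}. For $T_3^+$, $D_G$ is complete. For $T_4\square K_2$, $D_G$ contains a triangle; indeed the antipodal vertices of the $K_{2,2,2}$ together with the apex share four neighbours. So, by Lemma \ref{Lem:k=5_Nbhd}, every vertex neighbourhood is either $C_3\lor C_3$ (the bowtie) or the graph $C_3$ with two pendant-to-apex edges, i.e.\ the second or third picture in Figure \ref{Fig:5nbhds}, or is edgeless.

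The plan is to build the twin pairs into a cyclic chain $P_0,P_1,\dots,P_{n-1}$, where $P_j=\{u_j,u_j'\}$, such that every vertex of $P_j$ is adjacent to both vertices of $P_{j+1}$. Then $\bigcup P_j$ with these edges is a copy of $T_n$ as an underlying graph. The signs are forced up to switching by orthogonality, using Remark \ref{remT2n} to normalize.

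Step 1 (local structure at a twin pair). Take a twin pair $\{u,u'\}$ with common neighbours $N_u\cap N_{u'}=\{a,b,c,d\}$. Since $\lambda(u,u')=0$, we may switch so that $\sigma(ua)=\dots=\sigma(ud)=1$ and $\sigma(u'a)=\sigma(u'b)=1$, $\sigma(u'c)=\sigma(u'd)=-1$. Each of $a,b,c,d$ has $u,u'$ as common-neighbour witnesses. I would show that $\{a,b\}$ and $\{c,d\}$ are themselves twin pairs: $a$ and $b$ already share the two neighbours $u,u'$ with paths of equal sign, so they need two more common neighbours with the compensating sign. The aim is to show these must come from a further twin pair $\{v,v'\}$ on the other side. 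This uses the neighbourhood list: examining $G_a$, which contains $u,u'$ with $u\not\sim u'$ (otherwise a bowtie analysis forces a triangle in $D_G$), pins down the remaining three neighbours of $a$. The hope is that $\{a,b\}$ and $\{c,d\}$ each have exactly four common neighbours, namely $u,u'$ plus a next pair.

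Step 2 (propagation). From Step 1, each twin pair $P$ has its four common neighbours split into two twin pairs, one "forward" and one "backward", with complete bipartite joins $K_{2,2}$ between consecutive pairs. Since $D_G$ is a matching, the twin pairs are disjoint. Starting from $P_0$ and walking forward, we obtain a sequence of pairs. By finiteness the sequence eventually repeats, and it can only close up at $P_0$, because each pair has exactly two neighbouring pairs. This gives a closed chain of length $n\geq 3$; length $2$ would make the two pairs share too many neighbours. Each vertex then has degree $4$ inside the chain, and its fifth edge goes elsewhere or within. Orthogonality on the $4$-cycles between consecutive pairs forces the sign pattern of Definition \ref{defT2n} after switching, using Remark \ref{remT2n} to align each pair's labelling.

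The main obstacle is Step 1: proving that the extra common neighbours of $a,b$ form a genuine twin pair adjacent to both $a$ and $b$, rather than some configuration where $a,b$ have exactly $2$ common neighbours with opposite signs. I would first try to rule out $\lambda$-balancing via only two extra paths by counting. If $|N_a\cap N_b|=2$, then $\sigma(ua)\sigma(ub)+\sigma(u'a)\sigma(u'b)=2\neq0$, a contradiction. Hence $|N_a\cap N_b|=4$, so $\{a,b\}$ is a twin pair, and similarly $\{c,d\}$, since $\sigma(u'c)\sigma(u'd)=1$ as well. The remaining work is to verify that the two further common neighbours of $a,b$ form a pair disjoint from $\{u,u'\}$ and from the $\{c,d\}$ side. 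This would be done by a short case check on $G_a$ using the neighbourhood list and the matching condition on $D_G$.
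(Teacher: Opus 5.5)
The essential argument in your proposal is correct and is the paper's proof. You normalize at a twin pair and split its four common neighbours by the sign of their $2$-walk contributions; the paper phrases this as ``exactly two of the six $4$-cycles are positive''. You then note that two vertices in the same class would have $\lambda=\pm2$ if they had only two common neighbours, so each class is a twin pair. Finally you iterate and close the chain using that $D_G$ is a matching; your ``each pair has exactly two neighbouring pairs'' is the paper's ``vertex of degree six'' argument. Applying your counting argument to the twin pair $\{a,b\}$ already shows that its two further common neighbours form a twin pair, namely the sign class opposite to $\{u,u'\}$. So no case check on $G_a$ is needed.

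Several side claims in the proposal are false and should be deleted rather than proved.
\begin{enumerate}
\item $G_u\cong C_4\sqcup K_1$ does occur in Case 3. The statement of Lemma \ref{Lem:Locally_k5_c4} says $T_4\square K_2$, but its proof produces $K_{2,2,2}\square K_2=T_3\square K_2$ (note $T_3=K_{2,2,2}$ and $T_4=K_{4,4}$). That graph has $D_G$ a perfect matching and is item (3.1). Also, the neighbourhood types in Figure \ref{Fig:5nbhds} are $K_5$, $C_3\lor C_3$, $C_4\sqcup K_1$ and $\overline{K_5}$.
\item Twins can be adjacent. In $T_n^+$ with $n\geqslant5$, $D_G$ is a perfect matching, yet each twin pair $(0,j),(1,j)$ is joined by a matching edge. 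So ``$u\not\sim u'$'' is false.
\item Your ``remaining work'' would fail. In $T_3\square K_2$ the further common neighbours of $a,b$ are exactly $c,d$. This is just the chain closing with $n=3$, which your Step 2 already handles.
\end{enumerate}

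For the signs, balance of the consecutive $K_{2,2}$'s is not sufficient. The all-positive signature has that property, for instance. You also need each $4$-cycle $(x_{i-1},x_i,x_{i+1},y_i)$ through three consecutive pairs to be negative. This holds because $x_{i-1}$ and $x_{i+1}$ are not twins, so $x_i,y_i$ are their only common neighbours and $\lambda(x_{i-1},x_{i+1})=0$ forces the sign. These constraints leave exactly two switching classes, and swapping the labels inside a single pair interchanges them. The paper leaves this verification implicit as well.
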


\begin{proof}
Let $G$ be as in Case 3 of the theorem. We define a \textit{chain} $C$ in $G$ to be a subgraph of $G$ in which all vertices have degree $2$ or $4$. Now let $u_1,v_1$ be a pair of vertices in $G$ with four common neighbors. We consider the maximal chain of $G$ containing $u_1,v_1$. Of the six $4$-cycles formed by $u_1,v_1$ and their common neighbors, exactly two are positive. Hence we may label the common neighbors as $u_2,v_2,u_k,v_k$ so that the $4$-cycle $(u_1,u_i,v_1,v_j)$ is positive if and only if $i=j$. 

Clearly the maximal chain under consideration must contain each of $u_2,v_2,u_k,v_k$. Since the cycle $(u_1,u_2,v_1,v_2)$ is positive, the vertices $u_2$ and $v_2$ must have four common neighbors in $G$, hence the maximal chain must contain vertices $u_3,v_3$ adjacent to both $u_2$ and $v_2$, and the cycle $(u_2,v_2,u_3,v_3)$ must also be positive, so this cannot be the whole chain either.

We claim that our maximal chain is in fact the desired $T_n$ subgraph. Indeed, no $u_i$ and $v_i$ may coincide with $u_j$ and $v_j$ for $j<i$ since this would yield a vertex of degree six. So maximality forces the chain to close, with some $u_i=u_k$, but then $u_k$ must have four common neighbors with both $v_k$ and $v_i$, so by the assumption of the case, $v_k=v_i$.
\end{proof}

\begin{lemma}
    If we are in Case 3 of Theorem \ref{Thm:Main_thm_technical} then $G$ belongs to one of the three listed infinite families. 
\end{lemma}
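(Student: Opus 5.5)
By Lemma \ref{Lem:Tn_subgraph}, $\Gamma$ contains a signed subgraph switching equivalent to $T_n$ for some $n\geqslant 3$. Fix such a copy $\Delta$ on the vertex set $X=\{(0,j),(1,j)\}$, labelled as in Definition \ref{defT2n}. The plan is to determine how the fifth edge at each vertex of $X$ behaves. Since $\Delta$ is $4$-regular and $\Gamma$ is $5$-regular, every vertex of $X$ has exactly one further edge. Either that edge goes to $Y=V(\Gamma)\setminus X$, or it is a chord inside $X$. Because $E(X,Y)$ is then a matching incident to a subset of $X$, the first thing to prove is that the two options cannot mix. If some vertex of $X$ sends its extra edge into $Y$, then the connectivity argument in the proof of Lemma \ref{Lem:k-1_reg_ind} should apply verbatim. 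That argument uses only that $\Delta$ is a connected $(k-1)$-regular orthogonal signed subgraph and that the vertices of $X$ have degree $k$ in $\Gamma$. It propagates the $Y$-neighbor along edges of $\Delta$, forcing every vertex of $X$ to have a $Y$-neighbor, and so gives $\Gamma\cong T_n\square K_2$. So either $\Gamma$ is $T_n\square K_2$ (case (3.1)), or $V(\Gamma)=X$ and the extra edges form a perfect matching $M$ on $X$.

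In the second situation, write the partner of $v$ in $M$ as $m(v)$. The main work is to show that $m$ is either $(0,j)\leftrightarrow(1,j)$ or $(0,j)\leftrightarrow(1,j+n/2)$, $(1,j)\leftrightarrow(0,j+n/2)$, up to the relabelling symmetry of Remark \ref{remT2n}. The tool is Lemma \ref{Lem:evencommon} together with orthogonality, applied to pairs involving a chord.

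Let $v=(\epsilon,j)$ and $w=m(v)$. For each neighbor $z$ of $v$ in $\Delta$, the pair $w,z$ gains the common neighbor $v$. Hence $w$ and $z$ must have an odd number of common neighbors in $\Delta$ alone, or else acquire another one through a second chord. In $T_n$, two vertices have common neighbors only if their levels differ by $0$ or $\pm2$. Vertices $(\epsilon,i)$ and $(\epsilon',i)$ have four common neighbors, and levels $i$ and $i\pm2$ give two. So in $T_n$ every pair has an even number of common neighbors, and each such incidence must be balanced by a second chord. Concretely, for $z=(\cdot,j+1)$ we need $m(z)\sim w$ in $\Delta$. Thus $m$ maps the neighborhood structure compatibly: if $v\sim z$ in $\Delta$, then $m(v)$ and $m(z)$ are at distance at most $1$.

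From this one should derive that the map on levels induced by $m$ is a shift $j\mapsto j+s$ or a reflection. Next we use that $(0,j)$ and $(1,j)$ already have four common neighbors and must not acquire a fifth, so that case 3 stays intact; equivalently, $D_G$ remains a matching. Combined with the sign conditions, this should restrict the possibilities to $s=0$ with partner in the opposite row, which gives $T_n^+$, or $s=n/2$ with the twist of Definition \ref{defTast}, which gives $T_{2n}^{\ast}$. Small values ($n\leqslant 4$, and $n=6$ for $T^\ast$) are excluded because they either create triangles in $D_G$ (these are handled by Lemma \ref{Lem:MT_D4triangle}) or coincide by Lemma \ref{remcases}.

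Finally, the signs of the chords are fixed by orthogonality of the pairs $(w,z)$ above. Switching at both vertices of a level, as in Remark \ref{remT2n}, normalizes all chord signs to $+1$.

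I expect the main obstacle to be the combinatorial step of pinning down the matching $m$. The local constraints I described give a ``distance-preserving'' condition, but turning it into a global shift requires care. The delicate points are the reflection case, chords joining vertices at level distance $1$ or $2$ (which would create extra common neighbors), and small $n$ where levels wrap around. My first attempt would be a direct local analysis at one chord $v\,m(v)$: list all chords forced at the neighbors of $v$, show the level offset of $m$ is locally constant, and then propagate that offset around the cycle of levels.
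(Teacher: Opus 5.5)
\paragraph{Verdict.} There is a genuine gap: your framework is sound, but every step that uses the signs is left undone, and one normalization step is wrong.

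\paragraph{What works.} Your first step is correct. Lemma~\ref{Lem:k-1_reg_ind} is stated for a not necessarily induced subgraph, and its proof goes through, so either $\Gamma\cong T_n\square K_2$ or the fifth edges form a perfect matching $m$ on $X$. The paper reaches the same dichotomy differently: it shows $N_u\cap N_v\subseteq X$, so $\Gamma[X]$ is orthogonal and hence regular. Your parity step is also correct, and it is a cleaner replacement for the paper's ``minimal $k-\ell$'' propagation. The new common neighbour $v$ of $m(v)$ and $z\in N_\Delta(v)$ can only be balanced by $m(z)\in N_\Delta(m(v))$. Hence $m$ is a fixed-point-free involutive automorphism of the underlying graph of $T_n$. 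For $n\geqslant 5$ the levels are its twin classes, so $m$ induces an involution of the cycle of levels: the identity, the rotation by $n/2$, or a reflection. The restriction to rotations by $0$ or $n/2$ comes from $m^2=\mathrm{id}$. The requirement that $(0,j),(1,j)$ not gain a fifth common neighbour is automatic and restricts nothing. The problem is that all of this is parity information, while the remaining decisions are made by the signs.

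\paragraph{What is missing.} Write $u_j=(0,j)$, $v_j=(1,j)$. Three sign arguments are needed.

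(a) A reflection with no fixed level swaps two adjacent levels, so it is excluded by simplicity. A reflection fixing a level $j_0$, however, passes every parity test, and is excluded only by signs. Let $c=\sigma(u_{j_0}v_{j_0})$, let $\tau$ be the sign of the chord at $u_{j_0+1}$, and let $\rho$ be the common sign of the two edges from $m(u_{j_0+1})$ (which lies at level $j_0-1$) to $u_{j_0}$ and $v_{j_0}$. Then $\lambda(u_{j_0},u_{j_0+1})=-c+\rho\tau$ and $\lambda(v_{j_0},u_{j_0+1})=c+\rho\tau$, and these cannot both vanish.

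(b) In the rotation-by-$n/2$ case, each pair of levels $\{j,j+n/2\}$ is joined either ``straight'' or ``crossed''. The move of Remark~\ref{remT2n} toggles two cyclically consecutive pairs, so it preserves the parity of the number of crossed pairs. That parity therefore cannot be normalized away. Orthogonality is what forces it to equal the parity of $n/2$; this is the content of the paper's comparison of $\lambda(u_{n-2},u_{n/2-1})$ with $\lambda(u_0,u_{n/2-1})$. Without it you cannot exclude, for example, all pairs but one crossed.

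(c) Your normalization of chord signs via Remark~\ref{remT2n} fails for $T_n^+$. That move switches both ends of the chord $u_{j+1}v_{j+1}$, so it never changes the sign of a chord $u_jv_j$. An all-negative matching therefore cannot be made positive this way. You need the paper's device: switch at every $u_j$, then reverse the levels $j\mapsto -j$.
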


\begin{proof}
By Lemma \ref{Lem:Tn_subgraph} the signed graph contains $T_n$ as a subgraph for some $n\geqslant 3.$ Let $X$ be the vertex  set of such a signed subgraph and let $\overline{X}$ be the set of
remaining vertices of $\Gamma$.
We  observe  that  $N_u\cap N_v\subseteq  X$ for  any two distinct vertices  $u,v\in X$. Since otherwise, $|N_u\cap N_v|$ would be odd, impossible.
This shows that $\Gamma[X]$ is an  orthogonal signed graph and so is regular.

If  $\Gamma[X]$ is 4-regular, then $T_n$ is an induced subgraph and Lemma \ref{Lem:k-1_reg_ind} implies that $G$ is $T_n\square K_2$.

So we assume  $\Gamma[X]$ is 5-regular, hence spanning.
We may let  $X=\{0,1\}\times\{0,1,\ldots,n-1\}$, where we use modulo $n$ computation on the second coordinate of elements of $X$.
 For simplicity, let us  write  $u_j=(0,j)$ and $v_j=(1,j)$ for  $0\leqslant j \leqslant n-1$.
We consider two cases.

In case (i),  we assume that each vertex $u_j$ is joined  to
$v_j$ for every $0\leqslant j\leqslant n-1$.   From  $N(u_j)\cap N(u_{j+1})=\{v_j,v_{j+1}\}$, we have
$\lambda(u_j,u_{j+1})=\sigma(u_jv_j)-\sigma(u_{j+1}v_{j+1})=0.$
So $\sigma(u_jv_j)=\sigma(u_{j+1}v_{j+1})$. It follows that  all the edges $u_jv_j$, $0\leqslant j\leqslant n-1$ have the same sign.
 If these edges have sign $-1$, then
do switching at all vertices $u_j$, $0\leqslant j\leqslant n-1$ and after that relabel each vertex $u_j$ by $u_{n-j}$ and similarly $v_j$ with $v_{n-j}$
 to get $\Gamma=T^+_{n}$.
Since we are in Case 3, the cases $n=3,4$ are excluded, so $n\geqslant 5$.

It remains to consider case (ii) in which we assume that  some vertex $u_{\ell}$ is joined  to
some other vertex $u_k$ such that   $k> \ell$ and $k-\ell$ is minimum. Here, we have used Remark \ref{remT2n}.
We have $u_{\ell}\in N(u_{\ell+1})\cap N(u_{k})$. The orthogonality of $\Gamma$ and the minimality of $k-\ell$ force that $u_{k+1}\in N(u_{\ell+1})\cap N(u_k)$ or
$v_{k+1}\in N(u_{\ell+1})\cap N(u_k)$. The same argument applies to $N(v_{\ell+1})\cap N(u_{k})$.
This shows that there is a matching of size two from the vertex set $\{u_{\ell+1},v_{\ell+1}\}$ to the vertex set $\{u_{k+1},v_{k+1}\}$.
Repeating the argument, we deduce  that  there is a  matching of size two from $\{u_j,v_j\}$ to  $\{u_{j+k-\ell},v_{j+k-\ell}\}$ for every $0\leqslant j \leqslant n-1$.
Since $\Gamma$ is 5-regular, we necessarily  have $k-\ell=n/2$. By Remark  \ref{remT2n},  we  may  assume that   $u_j$ is joined to $v_{j+n/2}$ and $v_j$ is joined to
$u_{j+n/2}$ for every $0\leqslant j \leqslant n/2-2$. The same conclusion holds for $j=n/2-1$. However, to prove it, we need to determine the sign of the edges in already known matchings of size two.

Let $0\leqslant j \leqslant n/2-3$.
From  $N(u_j)\cap N(v_{j+1+n/2})=\{u_{j+1},v_{j+n/2}\}$, we have
$\lambda(u_j,v_{j+1+n/2})=\sigma(u_{j+1}v_{j+1+n/2})-\sigma(u_{j}v_{j+n/2})=0.$
So  $\sigma(u_{j+1}v_{j+1+n/2})=\sigma(u_{j}v_{j+n/2})$.
 Similar arguments apply to the other  edges in the matchings of size 2. It follows that the edges $u_jv_{j+n/2}$ and
$v_ju_{j+n/2}$ have the same sign, say $s$, for all $0\leqslant j \leqslant n/2-2$.
Now we consider the matching corresponding to $j=n/2-1$. Towards a contradiction,  assume that  $u_{n/2-1}$ is joined to $u_{n-1}$.
From  $N(u_{n-2})\cap N(u_{n/2-1})=\{u_{n-1},v_{n/2-2}\}$, we have
$\lambda(u_{n-2},u_{n/2-1})=\sigma(u_{n-1}u_{n/2-1})-s=0.$
So  $\sigma(u_{n-1}u_{n/2-1})=s$.
On the other hand,
From  $N(u_0)\cap N(u_{n/2-1})=\{u_{n-1},v_{n/2}\}$, one obtains
$\lambda(u_{0},u_{n/2-1})=\sigma(u_{n-1}u_{n/2-1})+s=0.$
So $\sigma(u_{n-1}u_{n/2-1})=-s$, a contradiction.

It remains to show that $s=1$. Suppose otherwise.  First switch at each vertex $u_j$, $0\leqslant j\leqslant n-1$. Then relabel  each vertex $u_j$ by $u_{n-j}$
  and  $v_j$ by $v_{n-j}$.
The resulting graph is the same as the original one except that the value of $s$ changes from $-1$ to 1.
Hence, $n=2n'$ is even and $\Gamma=T_{2n'}^{\ast}$.
\end{proof} 

\section{Some thoughts on $k>5$}\label{Sec:Kgeq6}

A natural next step would be to characterize the 6-regular orthogonal signed graphs. We do not attempt a full characterization here, as there appear to be a great number of disparate and complicated infinite families which must be accounted for. 

Instead, we mention a few of the infinite families we have found, and share some thoughts on how one might structure a program of classification.

\subsection{Decompositions into orthogonal ensembles} 
Let us say that a signed graph is an \textit{orthogonal ensemble} if it is a disjoint union of orthogonal signed graphs of the same valency. It is possible (though not necessary) that the edge set of an orthogonal signed graph admits a decomposition into a pair of orthogonal ensembles. 

\begin{definition}
A signed graph $\Gamma(A)$ is $(b,c)$-\emph{decomposable} if
$A = B + C$ where $B$ and $C$ are nonzero with disjoint support and each is the signed adjacency matrix of a $b$- (resp $c$-) regular orthogonal ensemble.
\end{definition}

The reason for this definition is that the vast majority of orthogonal signed graphs we have encountered are decomposable. Indeed, all three infinite families of 5-regular signed graphs are $(4,1)$-decomposable. In general, $(k,1)$-decomposability has a particularly clean formulation as the existence, in the $k$-regular part, of a fixed-point-free involutive automorphism of the base graph which reverses the signature. $(k,2)$-decomposability has a similar, but more complicated characterization. We spare the details but note two interesting infinite families of 6-regular signed graphs that we have found, each of which is $(4,2)$-decomposable.

\begin{definition}\label{defF8n}
We define the  family of orthogonal $6$-regular signed graphs $F_{4n}$ for $n\geqslant 2$. The vertex set is $V=\{0,1\}\times\{0,1,\ldots,4n-1\}$.
We start with  $F_{4n}=T_{4n}$ and  then  we  add the following edges, where all computations are modulo $4n$.
\begin{itemize}
\item[$\bullet$] $\{(0,0),(0,n)\}$,  $\{(1,0),(1,n)\}$  with sign $1$.
\item[$\bullet$] $\{(0,j),(1,j+n)\}$,  $\{(1,j),(0,j+n)\}$  with sign $-1$ for $0<j<n$.
\item[$\bullet$] $\{(0,n),(0,2n)\}$,  $\{(1,n),(1,2n)\}$  with sign $-1$.
\item[$\bullet$] $\{(0,j),(1,j+n)\}$,  $\{(1,j),(0,j+n)\}$  with sign $1$ for $n<j<4n$.
\end{itemize}
It is tedious but straightforward to check that $F_{4n}$ is in fact orthogonal.
\end{definition}

\begin{definition}\label{defU8n}
We define the  family of orthogonal $6$-regular signed graphs $U_{4n}$ for $n\geqslant 3$.
We start with  $U_{4n}=T_{2n}\cup T_{n}\cup T_{n}$ assuming   that
the vertex set is $V=V_O\cup V_L\cup V_R$ where $V_O=\{(i,j) | i=0,1, j=0,1,\ldots,2n-1\}$,
$V_L=\{(i,j)_L | i=0,1, j=0,1,\ldots,n-1\}$ and $V_R=\{(i,j)_R | i=0,1, j=0,1,\ldots,n-1\}$.
The induced signed subgraphs of $U_{4n}$ on $V_O$,  $V_L$ and $V_R$, respectively, are  $T_{2n}$, $T_{n}$ and $T_{n}$, respectively.
We  add the following edges.
\begin{itemize}
\item[$\bullet$] For $0\leqslant j<n$,  $\{(0,j),(0,j )_L\}$,  $\{(1,j),(1,j)_L\}$  with sign $1$ if $j$ is even and sign  $-1$ otherwise.
\item[$\bullet$] For $n\leqslant j<2n$,  $\{(0,j),(0,j-n)_L\}$,  $\{(1,j),(1,j-n)_L\}$  with sign $1$ if $j$ is even and sign  $-1$ otherwise.
\item[$\bullet$] $\{(0,0),(1,0)_R\}$,  $\{(1,0),(0,0)_R\}$  with sign $-1$.
\item[$\bullet$] For $0<j<n$,  $\{(0,j),(0,j)_R\}$,  $\{(1,j),(1,j)_R\}$  with sign $1$ if $j$ is even and sign  $-1$ otherwise.
\item[$\bullet$] $\{(0,n),(1,0)_R\}$,  $\{(1,n),(0,n)_R\}$  with sign $1$ if $n$ is even and sign  $-1$ otherwise.
\item[$\bullet$] For $n<j<2n$,  $\{(0,j),(0,j-n)_R\}$,  $\{(1,j),(1,j-n)_R\}$  with sign $1$ if $j$ is odd and sign  $-1$ otherwise.
\end{itemize}
Again, it is elementary to verify that $U_{4n}$ is in fact orthogonal.
\end{definition}

With these examples in mind, we think it may be prudent to organize the search for $6$-regular signed graphs into a search for lower valency decomposable orthogonal ensembles as well as a study of the properties of indecomposable orthogonal signed graphs. We leave this as future work. We do not expect either part of this program to be easy. In the next section we will construct an infinite family of indecomposable orthogonal 6-regular signed graphs.

\subsection{Splitting and joining}

In the $F_{4n}$ and $U_{4n}$ families above, any pair of vertices have at most 4 common neighbors. Next we present a notable 6-regular example which possesses pairs of vertices with 6 common neighbors. The basis of our construction is a new signature for the underlying graph of $T_6$ which can then be extended by adding two pairs of `hub' vertices attached to each of the two 6-cycles. Let $\eps(j)=(-1)^j$. 

\begin{definition}\label{defH6} (cf.~Figure \ref{fig:W_6}).
We define an orthogonal $6$-regular signed graph $W_6$ on the vertex set $V=\bigl(\{0,1\}\times\{0,1,\dots,5\}\bigr)\cup\{a,b,c,d\}$.  All computations
are modulo $6$. Each vertex $(0,j)$ is joined to $(0,j+1)$ with sign $-\eps(j)$, and each
vertex $(1,j)$ is joined to $(1,j+1)$ with sign $\eps(j)$.  Each vertex
$(0,j)$ is further joined to $(1,j+1)$ with sign $\eps(j)$ and to $(1,j-1)$
with sign $-\eps(j)$.  Finally the four \emph{hubs} are attached: $a$ and $b$
are each joined to every $(0,j)$, with signs $+1$ and $\eps(j)$ respectively,
while $c$ and $d$ are each joined to every $(1,j)$, with signs $+1$ and
$\eps(j)$ respectively. Every vertex has degree $6$, and $\{a,b\}$ and $\{c,d\}$ are called \textit{hub pairs}. Orthogonality is tedious but straightforward to verify.
\end{definition}

\begin{figure}
    \centering
    
\begin{center}
\begin{tikzpicture}[scale=1.15]
    \node[vtx] (v0) at (0.000,2.000) {};
    \node[vtx] (v1) at (-1.732,1.000) {};
    \node[vtx] (v2) at (-1.732,-1.000) {};
    \node[vtx] (v3) at (-0.000,-2.000) {};
    \node[vtx] (v4) at (1.732,-1.000) {};
    \node[vtx] (v5) at (1.732,1.000) {};
    \node[vtx] (v6) at (0.000,3.300) {};
    \node[vtx] (v7) at (-2.858,1.650) {};
    \node[vtx] (v8) at (-2.858,-1.650) {};
    \node[vtx] (v9) at (-0.000,-3.300) {};
    \node[vtx] (v10) at (2.858,-1.650) {};
    \node[vtx] (v11) at (2.858,1.650) {};
    \node[hubin] (v12) at (-0.550,0.320) {};
    \node[hubin] (v13) at (0.550,0.320) {};
    \node[hubout] (v14) at (-0.550,-0.320) {};
    \node[hubout] (v15) at (0.550,-0.320) {};
    \draw[en] (v0) -- (v1);
    \draw[ep] (v0) -- (v5);
    \draw[ep] (v0) -- (v7);
    \draw[en] (v0) -- (v11);
    \draw[ep] (v0) -- (v12);
    \draw[ep] (v0) -- (v13);
    \draw[ep] (v1) -- (v2);
    \draw[ep] (v1) -- (v6);
    \draw[en] (v1) -- (v8);
    \draw[ep] (v1) -- (v12);
    \draw[en] (v1) -- (v13);
    \draw[en] (v2) -- (v3);
    \draw[en] (v2) -- (v7);
    \draw[ep] (v2) -- (v9);
    \draw[ep] (v2) -- (v12);
    \draw[ep] (v2) -- (v13);
    \draw[ep] (v3) -- (v4);
    \draw[ep] (v3) -- (v8);
    \draw[en] (v3) -- (v10);
    \draw[ep] (v3) -- (v12);
    \draw[en] (v3) -- (v13);
    \draw[en] (v4) -- (v5);
    \draw[en] (v4) -- (v9);
    \draw[ep] (v4) -- (v11);
    \draw[ep] (v4) -- (v12);
    \draw[ep] (v4) -- (v13);
    \draw[en] (v5) -- (v6);
    \draw[ep] (v5) -- (v10);
    \draw[ep] (v5) -- (v12);
    \draw[en] (v5) -- (v13);
    \draw[ep] (v6) -- (v7);
    \draw[en] (v6) -- (v11);
    \draw[en] (v7) -- (v8);
    \draw[ep] (v8) -- (v9);
    \draw[en] (v9) -- (v10);
    \draw[ep] (v10) -- (v11);
\end{tikzpicture}
\end{center}
    \caption{The graph $W_6$. The outer hub pair has been deleted to make the figure more readable.}
    \label{fig:W_6}
\end{figure}

This example is remarkable in that it can be extended to an infinite family of 6-regular orthogonal signed graphs as follows (cf. Figure \ref{fig:Split_Join}):

\begin{figure}[ht]
\centering
\resizebox{\textwidth}{!}{%
\definecolor{graphgray}{RGB}{70,82,82}
\definecolor{accent}{RGB}{139,63,214}
\definecolor{ringred}{RGB}{235,145,145}
\definecolor{ringblue}{RGB}{115,180,245}

\tikzset{
  graph edge/.style={draw=graphgray, line width=0.95pt, line cap=round, line join=round},
  hi edge/.style={draw=accent, dashed, dash pattern=on 4pt off 2.5pt, line width=1.15pt, line cap=round, line join=round},
  boundary/.style={draw=graphgray, line width=0.95pt},
  point/.style={circle, fill=graphgray, inner sep=1.1pt},
  lab/.style={font=\normalsize, text=graphgray, inner sep=1pt},
  small lab/.style={font=\small, text=graphgray, fill=white, fill opacity=.94, text opacity=1, inner sep=1.0pt}
}

\begin{tikzpicture}[x=0.72cm,y=0.72cm]

\begin{scope}[shift={(0.0,0.0)}]
  \draw[boundary] (1.5,2.6) ellipse [x radius=1.0, y radius=2.3];
  \draw[boundary] (7.2,2.6) ellipse [x radius=1.0, y radius=2.3];

  \coordinate (LP1) at (1.8,4.1);
  \coordinate (LP2) at (1.8,3.5);
  \coordinate (LP3) at (1.8,2.9);
  \coordinate (LM1) at (1.8,2.0);
  \coordinate (LM2) at (1.8,1.4);
  \coordinate (LM3) at (1.8,0.8);

  \coordinate (LA) at (3.7,3.7);
  \coordinate (LB) at (3.7,1.7);
  \coordinate (RA) at (5.0,3.7);
  \coordinate (RB) at (5.0,1.7);

  \coordinate (RM3) at (6.9,4.1);
  \coordinate (RM2) at (6.9,3.5);
  \coordinate (RM1) at (6.9,2.9);
  \coordinate (RP3) at (6.9,2.0);
  \coordinate (RP2) at (6.9,1.4);
  \coordinate (RP1) at (6.9,0.8);

  \foreach \x in {LP1,LP2,LP3,LM1,LM2,LM3}{\draw[graph edge] (\x) -- (LA);}
  \foreach \x in {LP1,LP2,LP3}{\draw[graph edge] (\x) -- (LB);}
  \foreach \x in {LM1,LM2,LM3}{\draw[hi edge] (\x) -- (LB);}

  \foreach \x in {RM3,RM2,RM1,RP3,RP2,RP1}{\draw[graph edge] (RB) -- (\x);}
  \foreach \x in {RP3,RP2,RP1}{\draw[graph edge] (RA) -- (\x);}
  \foreach \x in {RM3,RM2,RM1}{\draw[hi edge] (RA) -- (\x);}

  \node[lab,anchor=east] at (LP1) {$p_1$};
  \node[lab,anchor=east] at (LP2) {$p_2$};
  \node[lab,anchor=east] at (LP3) {$p_3$};
  \node[lab,anchor=east] at (LM1) {$m_1$};
  \node[lab,anchor=east] at (LM2) {$m_2$};
  \node[lab,anchor=east] at (LM3) {$m_3$};

  \node[lab,anchor=west] at (RM3) {$m_3'$};
  \node[lab,anchor=west] at (RM2) {$m_2'$};
  \node[lab,anchor=west] at (RM1) {$m_1'$};
  \node[lab,anchor=west] at (RP3) {$p_3'$};
  \node[lab,anchor=west] at (RP2) {$p_2'$};
  \node[lab,anchor=west] at (RP1) {$p_1'$};

  \foreach \x in {LA,LB,RA,RB}{\node[point] at (\x) {};}
  \node[lab,anchor=west] at ($(LA)+(0.1,0.1)$) {$a$};
  \node[lab,anchor=west] at ($(LB)+(0.1,-0.1)$) {$b$};
  \node[lab,anchor=east] at ($(RA)+(-0.1,0.1)$) {$b'$};
  \node[lab,anchor=east] at ($(RB)+(-0.1,-0.1)$) {$a'$};
\end{scope}

\begin{scope}[shift={(10.2,0.0)}]
  \draw[boundary] (1.5,2.6) ellipse [x radius=1.0, y radius=2.3];
  \draw[boundary] (8.9,2.6) ellipse [x radius=1.0, y radius=2.3];

  \coordinate (BP1) at (1.8,4.1);
  \coordinate (BP2) at (1.8,3.5);
  \coordinate (BP3) at (1.8,2.9);
  \coordinate (BM1) at (1.8,2.0);
  \coordinate (BM2) at (1.8,1.4);
  \coordinate (BM3) at (1.8,0.8);

  \coordinate (CM1) at (8.6,4.1);
  \coordinate (CM2) at (8.6,3.5);
  \coordinate (CM3) at (8.6,2.9);
  \coordinate (CP1) at (8.6,2.0);
  \coordinate (CP2) at (8.6,1.4);
  \coordinate (CP3) at (8.6,0.8);

  \coordinate (A) at (4.8,4.0);
  \coordinate (B) at (5.4,3.3);
  \coordinate (C) at (4.8,2.1);
  \coordinate (D) at (5.4,1.4);

  \foreach \x in {BP1,BP2,BP3}{
    \draw[graph edge] (\x) -- (A);
    \draw[graph edge] (\x) -- (B);
  }
  \foreach \x in {CM1,CM2,CM3}{
    \draw[hi edge] (A) -- (\x);
    \draw[graph edge] (B) -- (\x);
  }

  \foreach \x in {BM1,BM2,BM3}{
    \draw[graph edge] (\x) -- (C);
    \draw[hi edge] (\x) -- (D);
  }
  \foreach \x in {CP1,CP2,CP3}{
    \draw[graph edge] (C) -- (\x);
    \draw[graph edge] (D) -- (\x);
  }

  \node[lab,anchor=east] at (BP1) {$p_1$};
  \node[lab,anchor=east] at (BP2) {$p_2$};
  \node[lab,anchor=east] at (BP3) {$p_3$};
  \node[lab,anchor=east] at (BM1) {$m_1$};
  \node[lab,anchor=east] at (BM2) {$m_2$};
  \node[lab,anchor=east] at (BM3) {$m_3$};

  \node[lab,anchor=west] at (CM1) {$m_3'$};
  \node[lab,anchor=west] at (CM2) {$m_2'$};
  \node[lab,anchor=west] at (CM3) {$m_1'$};
  \node[lab,anchor=west] at (CP1) {$p'_3$};
  \node[lab,anchor=west] at (CP2) {$p'_2$};
  \node[lab,anchor=west] at (CP3) {$p'_1$};

  \foreach \x in {A,B,C,D}{\node[point] at (\x) {};}

  \node[small lab,anchor=south east] at ($(A)+(-0.1,0.1)$) {$a_1$};
  \node[small lab,anchor=south west] at ($(A)+(0.1,0.1)$) {$b'_1$};

  \node[small lab,anchor=north east] at ($(B)+(-0.1,-0.1)$) {$a_2$};
  \node[small lab,anchor=north west] at ($(B)+(0.1,-0.1)$) {$b'_2$};

  \node[small lab,anchor=south east] at ($(C)+(-0.1,0.1)$) {$b_1$};
  \node[small lab,anchor=south west] at ($(C)+(0.1,0.1)$) {$a'_2$};

  \node[small lab,anchor=north east] at ($(D)+(-0.1,-0.1)$) {$b_2$};
  \node[small lab,anchor=north west] at ($(D)+(0.1,-0.1)$) {$a'_1$};
\end{scope}

\end{tikzpicture}%
}

    \caption{Splitting and joining two hub pairs $\{a,b\}$ and $\{a',b'\}$. The additional labels are used below. }
    \label{fig:Split_Join}
\end{figure}

First we `split' the vertices $a$ and $b$ each into two vertices of degree three, $a_1,a_2$ and $b_1,b_2$. We do this so that $b_1$ and $b_2$ are each incident with three edges of the same sign. Call this graph $W_6^s$ and take a second copy of $W_6^s$ with vertices $v'$ for $v\in V(W_6^s)$ and identify the following pairs of vertices: $(a_1,b_1'),(a_2,b_2'),(b_1,a_2'),(b_2,a_1')$. It is easy to check by counting 2-walks that this 6-regular signed graph is orthogonal. We may continue splitting and joining to add as many copies of $W_6$ as desired, and we may join the first and final ends as well.

Now we formulate this construction in a bit more detail and generality. Let $\Gamma$ be a $2\ell$-regular orthogonal signed graph which possesses distinct vertices $a,b$ such that $|N_a\cap N_b|=2\ell$. Let $P=\{p_1,\dots ,p_\ell\}$ be the subset of $V(\Gamma)$ consisting of  midpoints of the positive $2$-walks from $a$ to $b$ and let  $M=\{m_1,\dots, m_\ell\}$ be the subset of $V(\Gamma)$ consisting of midpoints of negative $2$-walks from $a$ to $b$. We say that such a signed graph is {\sl splittable at $a,b$} if $a$ and $b$ are the only vertices of $\Gamma$ that have neighbors in both $P$ and $M$. $\Gamma$ is {\sl splittable} if it is splittable at some pair of vertices $a,b$. 

Let $\Gamma_1$ be splittable at $a,b$ and $\Gamma_2$ be splittable at $a',b'$. The {\sl split and join of $\Gamma_1$ and $\Gamma_2$ at $a,b,a',b'$} is a signed graph denoted $\Gamma_1^{a,b}*\Gamma_2^{a',b'}$ and defined as follows (see Figure \ref{fig:Split_Join}).

Delete $a,b,a',b'$ and their incident edges and create new vertices $a_1,a_2,b_1,b_2$ with adjacency as follows: 
\begin{itemize}
    \item $N(a_1)=N(a_2)=\{p_1,\dots, p_\ell,m_1',\dots ,m_\ell'\}$
    
    \item $N(b_1)=N(b_2)=\{p_1',\dots, p_\ell',m_1,\dots, m_\ell\}$

    \item The edges $\{a,m_i'\}$ and $\{d,m_i\}$ are negative, and all other new edges are positive.
\end{itemize}

\begin{lemma}\label{Thm:Splitjoin}
Let $\Gamma_1$ and $\Gamma_2$ be $2\ell-$regular orthogonal signed graphs. Suppose $\Gamma_1$ is splittable at $a,b$ and $\Gamma_2$ is splittable at $a',b'$. If $\Gamma_1$ is $3$-connected then $\Gamma_1^{a,b}*\Gamma_2^{a',b'}$ is a $2\ell-$regular orthogonal signed graph.
\end{lemma}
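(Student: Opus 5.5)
The plan is a direct verification. I will check three things: the new signed graph is $2\ell$-regular, every pair of distinct vertices has $\lambda(u,v)=0$, and the underlying graph is connected. Throughout I read the sign rule in the definition as it is drawn in Figure~\ref{fig:Split_Join}: the edges $a_1m_i'$ and $b_2m_i$ are negative and all other new edges are positive.

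First I normalize both $\Gamma_1$ and $\Gamma_2$. Since $|N_a\cap N_b|=2\ell=\deg a=\deg b$, we have $N_a=N_b=P\cup M$. Switching at the vertices of $P\cup M$, I may assume every edge at $a$ is positive. Then $b$ is positive on $P$ and negative on $M$, by the definition of $P$ and $M$. I do the same in $\Gamma_2$ at $a',b'$. Switching in $\Gamma_1,\Gamma_2$ is compatible with the construction up to switching the result, so nothing is lost.

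Regularity is immediate. Each $p_i$ loses $a,b$ and gains $a_1,a_2$. Each $m_i$ loses $a,b$ and gains $b_1,b_2$. Primed vertices behave symmetrically. The new vertices have degree $|P|+|M'|=2\ell$.

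For orthogonality I split the pairs $\{u,v\}$ into cases.

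(i) Both $u,v$ are old vertices of $\Gamma_1$. Only the common neighbours $a,b$ are replaced.
\begin{itemize}
\item If $u,v\in P$, the old contribution is $1+1$ (via $a$ and $b$) and the new one is $1+1$ (via $a_1,a_2$).
\item If $u,v\in M$, the old contribution is $1\cdot1+(-1)(-1)=2$ and the new one is $1+(-1)(-1)=2$ (via $b_1,b_2$).
\item If $u\in P$ and $v\in M$, the old contribution is $1-1=0$, and there is no new common neighbour.
\end{itemize}
If $u$ or $v$ lies outside $P\cup M$, nothing changes.

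(ii) A new vertex, say $a_1$, with an old $y\neq a,b$ of $\Gamma_1$. The common neighbours are $N(y)\cap P$. Write $S_P$ and $S_M$ for the signed sums of $y$ over its neighbours in $P$ and in $M$. Then $\lambda(a,y)=S_P+S_M=0$ and $\lambda(b,y)=S_P-S_M=0$ in $\Gamma_1$, so $S_P=0$. The other new vertices are handled the same way, using $S_M=0$ and the analogous sums in $\Gamma_2$.

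(iii) Two new vertices.
\begin{itemize}
\item The pair $a_1,a_2$ has common neighbours $P\cup M'$ and $\lambda=\ell-\ell=0$. The pair $b_1,b_2$ is analogous.
\item An $a$-type and a $b$-type vertex have disjoint neighbourhoods $P\cup M'$ and $P'\cup M$.
\end{itemize}

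(iv) An old vertex $y$ of $\Gamma_1$ and an old vertex $z$ of $\Gamma_2$. Common neighbours can only be new vertices.
\begin{itemize}
\item If $y\in P$ and $z\in M'$, the contributions are $(+)(-)$ via $a_1$ and $(+)(+)$ via $a_2$, total $0$.
\item If $y\in M$ and $z\in P'$, they are $(+)(+)$ via $b_1$ and $(-)(+)$ via $b_2$, total $0$.
\item All other combinations have no common neighbour.
\end{itemize}
So all off-diagonal entries of $A^2$ vanish. No multiple edges arise, since the new vertices join the two disjoint copies.

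Connectivity is the only place the hypothesis is used. Because $\Gamma_1$ is $3$-connected, $\Gamma_1-\{a,b\}$ is connected. Every component of $\Gamma_2-\{a',b'\}$ contains a neighbour of $a'$ or $b'$, since $\Gamma_2$ is connected. That neighbour lies in $P'\cup M'$ and is therefore adjacent to some new vertex. Every new vertex is adjacent to $P\subseteq V(\Gamma_1)$ or to $M\subseteq V(\Gamma_1)$. Hence everything is connected to $\Gamma_1-\{a,b\}$.

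The main obstacle is bookkeeping rather than an idea. One has to fix the intended signs correctly: the definition's ``$\{a,m_i'\}$, $\{d,m_i\}$'' must be read via the figure as $a_1m_i'$ and $b_2m_i$. One also has to make sure case (ii) really uses both identities $\lambda(a,y)=0$ and $\lambda(b,y)=0$. This shows the splittability hypothesis is not even needed for orthogonality.
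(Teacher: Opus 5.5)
Your proof is correct and has the same skeleton as the paper's: a case check that all 2-walks stay balanced, plus connectedness from the $3$-connectivity of $\Gamma_1$. Your connectivity paragraph is the argument the paper only asserts.

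The real difference is the case of a new vertex $x\in\{a_1,a_2,b_1,b_2\}$ against an old vertex $y$.
\begin{itemize}
\item The paper uses splittability here. Since $y$ sees only $P$ or only $M$, each new vertex looks exactly like $a$ or like $b$ from $y$'s side, so nothing changes.
\item You instead combine $\lambda(a,y)=S_P+S_M=0$ and $\lambda(b,y)=S_P-S_M=0$ to get $S_P=S_M=0$ directly. This is valid, since $N_a=N_b=P\cup M$ and $a\not\sim b$.
\end{itemize}
Your version therefore proves a stronger statement. The lemma holds for any pair $a,b$ with $|N_a\cap N_b|=2\ell$, and splittability is only ever used for connectedness through $3$-connectivity. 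The paper's route is quicker and more conceptual, but it leans on a hypothesis the lemma does not need. You also check explicitly the $P$--$P$, $M$--$M$ and new--new pairs that the paper dismisses ``up to symmetry'', which makes your write-up more complete.

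One correction. Your sentence that switching in $\Gamma_1,\Gamma_2$ ``is compatible with the construction up to switching the result'' is false for the construction as written, because the new edges get fixed signs. For example, switch a normalized $\Gamma_1$ at a single $p\in P$ and then perform the construction. For any other $q\in P$, the vertices $a,b$ contributed $-2$ to $\lambda(p,q)$, but $a_1,a_2$ contribute $+2$. The result therefore has $\lambda(p,q)=4$ and is not orthogonal.

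The construction only makes sense relative to the normalization at $a$ and $a'$ shown in the figure. Equivalently, you could let the new edge signs be inherited from the old ones. The paper's proof tacitly assumes this as well. Present your normalization as part of the definition, not as a harmless reduction; with that change the argument is complete.
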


\begin{proof}
We only need to check that the 2-walks between each pair of vertices $x,y\in V(\Gamma_1^{a,b}*\Gamma_2^{a',b'})$ remain balanced. Clearly this is true if neither $x$ nor $y$ belongs to the set $S=\{a_1,a_2,b_1,b_2\}\cup P\cup M\cup P'\cup M'$ since no relevant adjacency or sign on a 2-walk between $x$ and $y$ has been altered.

Suppose $x\in S,y\notin S$. Again, if $x\in  P\cup M\cup P'\cup M'$ nothing is altered and there is nothing to show. If $x\in \{a_1,a_2,b_1,b_2\}$ we may assume without loss that $y\in V(\Gamma_1)$, now since $\Gamma_1$ is splittable, $y$ is not incident with both $P$ and $M$, so none of the 2-walks from $x$ to $y$ have been altered in this case either. 

Now if $x,y\in S$ we have several cases to check. There are only a few interesting cases, up to symmetry. For $x\in P,y\in M'$, two paths between the vertices are added, but they are balanced by construction. For $x\in P,y\in M$, two paths are destroyed, but they were balanced in $\Gamma_1$ or $\Gamma_2$, so $x$ and $y$ remain balanced. Finally, $\Gamma_1^{a,b}*\Gamma_2^{a',b'}$ is connected since $\Gamma_1$ is $3$-connected.
\end{proof}

Two remarks on the split and join operation are in order. 

First, note that the 4-regular orthogonal signed graphs $T_n$ are themselves splittable, but the application of the split and join operation does not give anything new. In particular, let $\Gamma_1$ be $T_{n_1}$ and pick vertices $u,v$ with four common neighbors. Let $\Gamma_2$ be $T_{n_2}$ and pick vertices $u',v'$ with four common neighbors. It is easy to see that these signed graphs are splittable at these vertices, and that $\Gamma_1^{u,v}*\Gamma_2^{u',v'}\cong T_{n_1+n_2}$.

Second, note that a pair of the joined vertices of $W_6*W_6$ is itself splittable. This allows one to construct many infinite families from iterated splitting of $W_6*W_6$, not just the one mentioned above. 

To conclude, we offer an example of an 8-regular orthogonal signed graph $W_8$ which possesses two pairs of splittable vertices. It is essentially a generalization of $W_6$ with an extra set of diagonal edges between the $n$-cycles. 
Using Theorem \ref{Thm:Splitjoin} iteratively, we may extend $W_8$ to a new infinite family of $8$-regular orthogonal signed graphs.

\begin{definition}\label{defH8} (cf.~Figure \ref{fig:W8}).
We define the orthogonal $8$-regular signed graph $W_8$.  The vertex set is
$V=\bigl(\{0,1\}\times\{0,1,\dots,7\}\bigr)\cup\{a,b,c,d\}$.  All computations
are modulo $8$.

Each vertex $(0,j)$ is joined to $(0,j+1)$, and each vertex $(1,j)$ to
$(1,j+1)$, both with sign $-\eps(j)$.  Each vertex $(0,j)$ is further joined to
four outer vertices in two diagonal pairs: to $(1,j+1)$ with sign $-1$ and to
$(1,j-1)$ with sign $+1$ (the odd offset, constant sign); and to $(1,j+2)$ with
sign $\eps(j)$ and to $(1,j-2)$ with sign $-\eps(j)$ (the even offset,
alternating sign).  Finally the four \emph{hubs}: $a$ and $b$ are each joined
to every $(0,j)$, with signs $+1$ and $\eps(j)$; while $c$ and $d$ are each
joined to every $(1,j)$, with signs $+1$ and $-\eps(j)$. Orthogonality is tedious but straightforward to verify.
\end{definition}

\begin{figure}
    \centering
    
\begin{center}
\begin{tikzpicture}[scale=1.05]
    \node[vtx] (v0) at (0.000,2.000) {};
    \node[vtx] (v1) at (-1.414,1.414) {};
    \node[vtx] (v2) at (-2.000,0.000) {};
    \node[vtx] (v3) at (-1.414,-1.414) {};
    \node[vtx] (v4) at (-0.000,-2.000) {};
    \node[vtx] (v5) at (1.414,-1.414) {};
    \node[vtx] (v6) at (2.000,-0.000) {};
    \node[vtx] (v7) at (1.414,1.414) {};
    \node[vtx] (v8) at (0.000,3.300) {};
    \node[vtx] (v9) at (-2.333,2.333) {};
    \node[vtx] (v10) at (-3.300,0.000) {};
    \node[vtx] (v11) at (-2.333,-2.333) {};
    \node[vtx] (v12) at (-0.000,-3.300) {};
    \node[vtx] (v13) at (2.333,-2.333) {};
    \node[vtx] (v14) at (3.300,-0.000) {};
    \node[vtx] (v15) at (2.333,2.333) {};
    \node[hubin] (v16) at (-0.550,0.320) {};
    \node[hubin] (v17) at (0.550,0.320) {};
    \node[hubout] (v18) at (-0.550,-0.320) {};
    \node[hubout] (v19) at (0.550,-0.320) {};
    \draw[en] (v0) -- (v1);
    \draw[ep] (v0) -- (v7);
    \draw[en] (v0) -- (v9);
    \draw[ep] (v0) -- (v10);
    \draw[en] (v0) -- (v14);
    \draw[ep] (v0) -- (v15);
    \draw[ep] (v0) -- (v16);
    \draw[ep] (v0) -- (v17);
    \draw[ep] (v1) -- (v2);
    \draw[ep] (v1) -- (v8);
    \draw[en] (v1) -- (v10);
    \draw[en] (v1) -- (v11);
    \draw[ep] (v1) -- (v15);
    \draw[ep] (v1) -- (v16);
    \draw[en] (v1) -- (v17);
    \draw[en] (v2) -- (v3);
    \draw[en] (v2) -- (v8);
    \draw[ep] (v2) -- (v9);
    \draw[en] (v2) -- (v11);
    \draw[ep] (v2) -- (v12);
    \draw[ep] (v2) -- (v16);
    \draw[ep] (v2) -- (v17);
    \draw[ep] (v3) -- (v4);
    \draw[ep] (v3) -- (v9);
    \draw[ep] (v3) -- (v10);
    \draw[en] (v3) -- (v12);
    \draw[en] (v3) -- (v13);
    \draw[ep] (v3) -- (v16);
    \draw[en] (v3) -- (v17);
    \draw[en] (v4) -- (v5);
    \draw[en] (v4) -- (v10);
    \draw[ep] (v4) -- (v11);
    \draw[en] (v4) -- (v13);
    \draw[ep] (v4) -- (v14);
    \draw[ep] (v4) -- (v16);
    \draw[ep] (v4) -- (v17);
    \draw[ep] (v5) -- (v6);
    \draw[ep] (v5) -- (v11);
    \draw[ep] (v5) -- (v12);
    \draw[en] (v5) -- (v14);
    \draw[en] (v5) -- (v15);
    \draw[ep] (v5) -- (v16);
    \draw[en] (v5) -- (v17);
    \draw[en] (v6) -- (v7);
    \draw[ep] (v6) -- (v8);
    \draw[en] (v6) -- (v12);
    \draw[ep] (v6) -- (v13);
    \draw[en] (v6) -- (v15);
    \draw[ep] (v6) -- (v16);
    \draw[ep] (v6) -- (v17);
    \draw[en] (v7) -- (v8);
    \draw[en] (v7) -- (v9);
    \draw[ep] (v7) -- (v13);
    \draw[ep] (v7) -- (v14);
    \draw[ep] (v7) -- (v16);
    \draw[en] (v7) -- (v17);
    \draw[en] (v8) -- (v9);
    \draw[ep] (v8) -- (v15);
    \draw[ep] (v9) -- (v10);
    \draw[en] (v10) -- (v11);
    \draw[ep] (v11) -- (v12);
    \draw[en] (v12) -- (v13);
    \draw[ep] (v13) -- (v14);
    \draw[en] (v14) -- (v15);
\end{tikzpicture}
\end{center}
    \caption{The 8-regular orthogonal signed graph $W_8$. The outer hub pair has been deleted to make the figure more readable.}
    \label{fig:W8}
\end{figure}

\section*{Statement of AI usage.} This paper was almost entirely human-generated. ChatGPT 5.6 Sol and Claude Opus 5 were used to generate the tikz code for the figures and for proofreading. The signed graph $W_8$ was found using an exhaustive search designed and written by Claude. We gave Claude our description of $W_6$ (which was found by hand), and asked it to extend the construction to larger degrees. The authors take full responsibility for the accuracy and correctness of all statements made within the paper. 

\bibliographystyle{amsplain}
\bibliography{Orth_Sig_BCM_no_DOI}

\end{document}